\newtheorem{thm}{Theorem}[section]
\newtheorem{crl}[thm]{Corollary}
\newtheorem{lmm}[thm]{Lemma}
\newtheorem{prp}[thm]{Proposition}
\theoremstyle{definition}
\newtheorem{dfn}[thm]{Definition}
\newtheorem{exa}[thm]{Example}%%%
\newtheorem{notation}[thm]{Notation}
\theoremstyle{remark}
\newtheorem{rem}[thm]{Remark}%%% * means no numbering
\newcommand{\mathfont}{\mathbf}
\newcommand{\ZZ}{\mathfont Z}
\newcommand{\QQ}{\mathfont Q}
\newcommand{\FF}{\mathfont F}
\newcommand{\SSS}{\mathfont S}
\title[Supersingular curves via the Shimura--Taniyama method]{Supersingular curves via the Shimura--Taniyama method}
\author{Jeremy Booher}
\address{University of Florida}
\email{jeremybooher@ufl.edu}
\author{Rachel Pries}
\address{Colorado State University}
\email{pries@colostate.edu}
\date{\today}
\thanks{Pries was supported by NSF grant DMS-22-00418.
This work was supported by the Research Institute for Mathematical Sciences, an International Joint Usage/Research Center located in Kyoto University
We thank the organizers of the RIMS conference on 
Theory and Applications of Supersingular Curves and Supersingular Abelian Varieties, II.
We thank Everett Howe, Wanlin Li, Elena Mantovan, and Darren Schmidt for helpful 
conversations, and the referee for helpful comments.}  %optional
\keywords{\textit{curve, Jacobian, abelian variety, positive characteristic, supersingular, Frobenius, Newton polygon, automorphism}}  
\begin{document}

\maketitle

\begin{abstract}      %optional
For a curve $X$ which is an abelian cover of the projective line branched at three points, 
we study when its reduction to positive characteristic is supersingular.  
Using the method of Shimura and Taniyama, we give a complete classification of primes $p$
for which the reduction of $X$ modulo $p$ is supersingular 
when the genus of $X$ is at most $10$.
The natural density of this set of primes is larger than expected.

2020 MSC primary: 11G20, 11M38, 14G10, 14H10, 14H40. 
Secondary 11G10, 14G15, 14H37, 14H30.
%KEEP
%11G20 Curves over finite fields
%11M38 NT: Zeta and $L$-functions, analytic theory: Zeta $L$-functions characteristic $p$
%14G10 AG: Arithmetic questions: Zeta-functions and related questions 
%14H10 Curves in algebraic geometry, Families, moduli of curves (algebraic)
%14H40 Jacobians, Pryms

%11G10 Arithmetic Geometry, Abelian varieties of dim >1 
%14G15 Algebraic geometry over finite ground fields
%14H37  	Automorphisms of curves
%14H30  	Coverings of curves, fundamental group 
\end{abstract}

\section{Introduction}

Let $k$ be an algebraically closed field of prime characteristic $p$.
An elliptic curve over $k$ is supersingular if it has no points of order $p$.
A smooth projective curve $X$ of genus $g$ over $k$ is \emph{supersingular} if its Jacobian ${\mathfont{Jac}}(X)$ is isogenous to 
a product of $g$ supersingular elliptic curves.

One good method for constructing supersingular curves is to study families of curves that are abelian covers of the projective line
$\mathfont{P}^1$.    
The degree and ramification indices of the cover determine the genus $g$.
When the curve $X$ is branched at exactly three points, it is a quotient of a Fermat curve 
and ${\mathfont{Jac}}(X)$ has complex multiplication.  
This makes it easier to study the action of Frobenius on the cohomology \cite{Yui}
and hence determine whether the reduction of $X$ modulo $p$ is supersingular.  For example, Re studied supersingular quotients of the Fermat curve with the aim of producing supersingular curves of many genera in fixed characteristic \cite{re1,re2}.

Recently, we were curious about an analogous question: given $g$, for which primes $p$ do these constructions yield a  supersingular curve of genus $g$ in characteristic $p$?
Our motivation was to show that a new construction of supersingular curves \cite{BPnewss5} yields supersingular curves of genus $5$ in characteristic $p$ for infinitely many new primes $p$.  We wrote this paper because we 
could not find a reference that states what can be determined using this method.

Let $\SSS_g$ be the set of primes $p$ for which there exists a supersingular curve $X$ of genus $g$ defined over $k=\overline{\FF}_p$
such that $X$ is an abelian cover of $\mathfont{P}^1$ branched at exactly three points. 
For $5 \leq g \leq 10$, we determine the primes for which there exists a supersingular curve of this type 
in Sections~\ref{Section4}, \ref{Sdatacyclic} and \ref{Sdatanoncyclic}.
This yields the main result of this paper, Theorem~\ref{Tmainthm}, 
which states exactly which primes are in $\SSS_g$, for $5 \leq g \leq 10$, in terms of congruence conditions.
From these congruence conditions, we determine the natural density of the set $\SSS_g$, which we state as the following corollary.
  
\begin{crl} \label{Cintro} For $5 \leq g \leq 10$, the natural density $\delta_g$ of the set $\SSS_g$ is: 
\[\begin{array} {|c|c|c|c|c|c|c|}
\hline
g &  5 &   6 &   7 &   8 &  9 &  10 \\ \hline
\ \delta_g \ & \ 25/32 \ & \ 507/512 \ & \ 3/4 \ & \ 1023/1024 \ & \ 15/16 \ & \ 31/32 \  \\ \hline
%\delta_g \approx & .781 & .990 & .750 & .999 & .938 & .969 \\ \hline
\end{array}\]
\end{crl}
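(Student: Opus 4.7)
The plan is to derive Corollary~\ref{Cintro} as an immediate consequence of Theorem~\ref{Tmainthm}, whose statement (as indicated in the introduction) characterizes $\SSS_g$ by explicit congruence conditions on $p$. Once those congruences are known, the density computation reduces to an application of Dirichlet's theorem on primes in arithmetic progressions.

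First, for each $g$ with $5\leq g\leq 10$, I would collect from Theorem~\ref{Tmainthm} the full list of abelian covers $X\to\mathbf{P}^1$ of genus $g$ branched at three points, together with the congruence conditions (modulo the conductor $N_X$ of the relevant cyclotomic character) that govern whether the reduction of $X$ modulo $p$ is supersingular. The list of such covers is finite because specifying the Galois group, the ramification data at the three branch points, and the requirement $g(X)=g$ leaves only finitely many possibilities (this is exactly what Sections~\ref{Section4}, \ref{Sdatacyclic}, \ref{Sdatanoncyclic} enumerate). Let $M_g$ denote the least common multiple of the conductors $N_X$ over all such $X$.

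Next, I would lift every supersingularity condition to the single modulus $M_g$, forming a subset $U_g\subseteq (\mathbf{Z}/M_g\mathbf{Z})^{\times}$ with the property that a prime $p\nmid M_g$ lies in $\SSS_g$ if and only if $p\bmod M_g\in U_g$. By Dirichlet's theorem, the natural density of $\SSS_g$ is then
\[
\delta_g \;=\; \frac{|U_g|}{\phi(M_g)}.
\]
The remaining task is a direct verification, for each $g\in\{5,\ldots,10\}$, that $|U_g|/\phi(M_g)$ equals the value in the table. The denominators $32$, $512$, $4$, $1024$, $16$, $32$ are all powers of $2$, reflecting the fact that the relevant supersingularity tests on the cyclotomic Galois groups decouple into independent binary (``$p\equiv \pm 1$'') conditions at each prime divisor of $M_g$, with common factors of $\phi(M_g)$ canceling.

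The principal obstacle is not conceptual but combinatorial. A given genus $g$ may arise from several distinct abelian covers, whose supersingularity loci in $(\mathbf{Z}/M_g\mathbf{Z})^{\times}$ generally overlap; computing the density of the union therefore requires either an inclusion--exclusion argument or, more practically, an exhaustive enumeration of residue classes modulo $M_g$ to decide membership in $U_g$. Once this bookkeeping is carried out case-by-case for $g=5,6,7,8,9,10$, the tabulated densities follow.
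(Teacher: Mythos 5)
Your proposal is correct and matches the paper's (essentially unstated) proof: the corollary is read off directly from the congruence conditions of Theorem~\ref{Tmainthm}, computing the density of the union of residue classes via Dirichlet's theorem and inclusion--exclusion over a common modulus. One caveat for when you carry out the $g=5$ verification: the list of residues mod $120$ in Theorem~\ref{Tmainthm}(1) contains $17$ classes, which combined with the independent QNR-mod-$11$ condition yields $49/64$ rather than $25/32=50/64$, so either that list or the tabulated $\delta_5$ contains a misprint that you would need to resolve against the case analysis in Section~\ref{Section4}.
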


%[0.781250000000000, 0.990234375000000, 0.750000000000000, 0.999023437500000, 0.937500000000000, 0.968750000000000]

The cases $g=9,10$ are especially interesting, as noted in \cite[Expectation~8.5.4]{oort05}.
The codimension of the supersingular locus in 
the moduli space $\mathcal{A}_g$
of principally polarized abelian varieties of dimension $g$ is 
$g(g+1)/2 - \lfloor g^2/4 \rfloor$.  
For $g \geq 9$, this exceeds the dimension of the moduli space $\mathcal{M}_g$ 
of smooth curves of genus $g$.
The existence of supersingular curves of genus $g \geq 9$ implies that the intersection of the 
Torelli locus and supersingular locus in $\mathcal{A}_g$ is not dimensionally transversal.
This was already known to happen; e.g., there exist supersingular curves of every genus $g$ when $p=2$ \cite{vdgvdv}.
Corollary~\ref{Cintro} shows that this dimensionally non-transversal intersection occurs for most primes when $g=9$ and $g=10$, 
with $\delta_9 \approx .938$ and $\delta_{10} \approx .969$.

Work of Yui \cite{Yui} implies that $\delta_g$ is 
positive for all $g \geq 1$, which can also be deduced from our analysis, see Remark~\ref{Rpositivedensity}.  

For the proof, we use Ekedahl's work \cite[p.\ 173]{Ekedahl87} to classify the 
degrees and ramification indices of abelian covers $X \to \mathfont{P}^1$ branched at $3$ points, such that $X$ has genus $g$. 
When $g=4,5$, Ekedahl determines conditions on $p$ for which $X$ is superspecial 
(meaning ${\mathfont{Jac}}(X)$ is isomorphic to a product of supersingular elliptic curves).
We determine the set of primes $p$ for which the reduction is supersingular 
using the Shimura--Taniyama formula \cite[Section 5]{Tate71}, a method also used in \cite{LMPT1}.  
As explained in Section~\ref{Scyclic2}, 
this method exploits the structure of complex multiplication on ${\mathfont{Jac}}(X)$ to determine its Newton polygon.

We use SageMath to enumerate all abelian covers of $\mathfont{P}^1$ ramified at three points with genus $g$ satisfying $5 \leq g \leq 10$.  We restrict to the case $g \geq 5$ and $p$ odd because of previous constructions of supersingular curves:
for $g=1$ by Deuring \cite{deuring}; for $g=2$ by Serre \cite[Th\'eor\`eme 3]{serre82}, also \cite[Proposition~3.1]{IKO};
for $g=3$ by Oort \cite[Theorem~5.12]{oorthypsup}, also \cite[Theorem~1]{ibukiyama93};
for $g=4$ by Harashita, Kudo, and Senda \cite[Corollary~1.2]{khs20}, also \cite[Theorem~1.1]{P:somecasesOort}; and 
for $p=2$ and arbitrary $g$, by van der Geer and van der Vlugt \cite{vdgvdv}. 

\begin{rem}
Many mathematicians studied supersingular curves of genus $g\geq 5$ in characteristic $p$ for various pairs $(g,p)$.
Here is an incomplete list: Hermitian curves \cite{MR1186416}, Artin-Schreier curves \cite{MR1189892, vdgvdv, bouwWIN}, 
cyclic covers of $\mathfont{P}^1$ \cite{re1,re2,LMPT1, LMPT2};
and trigonal curves \cite{kudo19, kh22}.
\end{rem}

\section{Abelian covers branched at three points}

Let $X$ be a smooth projective connected curve over $k$.
Let $\phi : X \to {\mathfont{P}}^1$ be a Galois cover branched exactly at $B:=\{0,1, \infty\}$.  Let $g$ be the genus of $X$.
We suppose that the Galois group $G$ of $\phi$ is abelian and that $\phi$ is tamely ramified.

\subsection{Ramification and Inertia Types} \label{ss:types}

For $b \in B$, let $I_b$ be the inertia group above $b$ and let $c_b = |I_b|$. 
Since $\phi$ is tamely ramified, $I_b$ is cyclic.  
Without loss of generality, we suppose that $c_0 \geq c_1 \geq c_\infty$.

\begin{dfn}
The \emph{ramification type} for the cover $\phi$ is the tuple $z=[c_0,c_1,c_\infty,s]$, where $s$ is the index of $I_0$ in $G$.
\end{dfn}

The ramification type $z$ is essentially what Ekedahl studied, 
except that we switch the role of $0$ and $\infty$ to simplify the equation for $\phi$. 

\begin{notation}
Let $r=\mathrm{gcd}(c_0,c_1,c_\infty)$.  
For distinct $i,j \in B$, let $e_{i,j} =  \mathrm{gcd}(c_i,c_j)/r$.
\end{notation}

Ekedahl \cite[page 173]{Ekedahl87} verified that $e_{0,1}$, $e_{0,\infty}$, and $e_{1, \infty}$ are pairwise relatively prime, 
that $e_{0,1} \geq e_{0, \infty} \geq e_{1, \infty}$; and that
\begin{equation} \label{eq:formulaeij}
c_0 = re_{0,1}e_{0, \infty}, \quad c_1=re_{0,1}e_{1, \infty}, \quad c_\infty = re_{0,\infty}e_{1, \infty}.
\end{equation}

\begin{lmm} \cite[page 173]{Ekedahl87}
With notation as above:
\begin{equation} \label{Eformulag}
2g-2 = s(re_{0,1} e_{0,\infty} - (1 + (e_{0,1}/e_{1,\infty}) + (e_{0, \infty}/e_{1,\infty})).
\end{equation}
\end{lmm}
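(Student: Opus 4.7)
The plan is to prove the formula by a direct application of the Riemann--Hurwitz formula to the tame Galois cover $\phi: X \to \mathbb{P}^1$, followed by a bookkeeping step that rewrites everything in terms of $r$, the $e_{i,j}$, and $s$.

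First I would set $n = |G|$ and note that, since $\phi$ is Galois with abelian group $G$, the fiber above each branch point $b \in B$ consists of $n/c_b$ points, each with ramification index exactly $c_b$. In particular, by the definition of $s$, we have $n = s c_0 = s r e_{0,1} e_{0,\infty}$ using \eqref{eq:formulaeij}. Since $\phi$ is tamely ramified, the local contribution to the different at a point above $b$ is $c_b - 1$, so the ramification divisor has degree
\[
\deg R = \sum_{b \in B} \frac{n}{c_b}(c_b - 1) = 3n - \sum_{b \in B} \frac{n}{c_b}.
\]
Applying Riemann--Hurwitz with $g_{\mathbb{P}^1} = 0$ gives
\[
2g - 2 = -2n + \deg R = n - \sum_{b \in B} \frac{n}{c_b}.
\]

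Next I would translate each $n/c_b$ using \eqref{eq:formulaeij}. We have $n/c_0 = s$, and
\[
\frac{n}{c_1} = \frac{s r e_{0,1} e_{0,\infty}}{r e_{0,1} e_{1,\infty}} = \frac{s\, e_{0,\infty}}{e_{1,\infty}}, \qquad \frac{n}{c_\infty} = \frac{s r e_{0,1} e_{0,\infty}}{r e_{0,\infty} e_{1,\infty}} = \frac{s\, e_{0,1}}{e_{1,\infty}}.
\]
Substituting these expressions and factoring out $s$ from $n = s r e_{0,1} e_{0,\infty}$ yields
\[
2g - 2 = s r e_{0,1} e_{0,\infty} - s\!\left(1 + \frac{e_{0,\infty}}{e_{1,\infty}} + \frac{e_{0,1}}{e_{1,\infty}}\right),
\]
which is precisely \eqref{Eformulag}.

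There is essentially no hard step: the proof is a mechanical computation. The only subtlety is verifying that the fractions $e_{0,\infty}/e_{1,\infty}$ and $e_{0,1}/e_{1,\infty}$ are integers, i.e., that the above formula really is a polynomial identity with integer values. This is automatic from the divisibility $e_{1,\infty} \mid e_{0,\infty}$ and $e_{1,\infty} \mid e_{0,1}$, which in turn follows from the fact that $n/c_1$ and $n/c_\infty$ are positive integers (the orders of the fibers), together with the pairwise coprimality of the $e_{i,j}$ stated in Ekedahl's setup.
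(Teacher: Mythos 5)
Your proof is correct and is essentially identical to the paper's: both apply the Riemann--Hurwitz formula to the tame Galois cover, use $|G| = s c_0$ and the fiber counts $|G|/c_b$, and then substitute \eqref{eq:formulaeij}. One small caveat: your closing remark that $e_{1,\infty} \mid e_{0,\infty}$ and $e_{1,\infty} \mid e_{0,1}$ is not right --- combined with the pairwise coprimality of the $e_{i,j}$ it would force $e_{1,\infty}=1$; what the integrality of $n/c_1$ and $n/c_\infty$ actually gives is $e_{1,\infty} \mid s$, and in any case \eqref{Eformulag} needs no such integrality of the individual fractions, since it is an identity of rational numbers in which only the products $s\,e_{0,\infty}/e_{1,\infty}$ and $s\,e_{0,1}/e_{1,\infty}$ (the fiber sizes) must be integers.
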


\begin{proof}
Above $b \in B$, the number of ramification points is $\#G/c_b$ and each has inertia group of order $c_b$.
By definition, $|G|=sc_0$. 
By the Riemann--Hurwitz formula:
\begin{eqnarray*}
2g-2 & = & -2 |G| + \sum_{b \in B} (|G|/c_b)(c_b-1) \\
& = &  |G| (1 - ((1/c_0) + (1/c_1) + (1/c_\infty))) \\
& = & s(c_0 - (1 + (c_0/c_1) + (c_0/c_\infty))).
\end{eqnarray*}
The result follows from \eqref{eq:formulaeij}.
\end{proof}

A cover $\phi$ can be described via the numerical data of the ramification type.  
However, there are also group-theoretic constraints on the ramification.
To state these, we identify $G$ with $\mathrm{Gal}(\phi)$ and define the inertia type.  

\begin{dfn} \label{dfn:inertialtype}
Following Bouw \cite[Definitions~3.2 \& 4.1]{Bouw}, the \emph{inertia type} of $\phi$ is the triple $a=(a_0,a_1,a_\infty)$, where $a_b$ denotes the
\emph{canonical generator} for $I_b$, found from the action of inertia on a local parameter at a ramification point above $b$. 
\end{dfn}

There are three constraints on the inertia type: $|a_b| = c_b$ for $b \in B$; 
$a_0, a_1, a_\infty$ generate $G$; and $a_0a_1a_\infty = \mathrm{Id}_G$.
This implies that $s$ divides $c_1$ and $s$ divides $c_\infty$.
Given a tuple $z=[c_0,c_1,c_\infty,s]$ satisfying the numerical constraints for a ramification type, 
it is apriori possible that there is no tuple $a$ satisfying the group-theoretic constraints for the inertia type. 
See Remark~\ref{Rnovalid} for some examples.

Given an inertia type, we can change the labeling of branch points whose inertia groups have the same order, 
permuting those entries of the inertia type.
Also, we can change the identification of $G$ with $\mathrm{Gal}(\phi)$;
this modifies each entry of the inertia type by some $\sigma \in \mathrm{Aut}(G)$.
Two inertia types $a$ and $a'$ are \emph{equivalent}, denoted $a \approx a'$, if they are in the same orbit under these actions.

\subsection{Cyclic covers branched at three points} \label{Scyclic1}

Suppose $\phi : X \to {\mathfont{P}}^1$ is cyclic of order $m$, and identify $G = \ZZ/m\ZZ$.  
The inertia type $(a_0,a_1,a_\infty)$ is defined up to multiplication by $(\ZZ/m\ZZ)^*$.  
Note that $c_b = |a_b| = m/\gcd(m,a_b)$.  An affine equation for the cover is 
\begin{equation} \label{eq:cycliccover}
X: y^m=x^{a_0}(1-x)^{a_1},
\end{equation}
with $G$ acting by multiplication by $m$th roots of unity on $y$.  

\begin{dfn} \label{Dsignature}
For $1 \leq j < m$, let $f_j$ be the dimension of the $\zeta_m^j$-eigenspace of $H^0(X, \Omega^1_X)$. The \emph{signature type} of $\phi$ is $f=(f_1, \ldots, f_{m-1})$.
\end{dfn}

Let $\langle q\rangle$ denote the fractional part of $q \in \QQ$.
Then $f_j= -1+\sum_{b\in B}\langle\frac{-ja_b}{m}\rangle$ by e.g., \cite[Lemma~2.7, Section~3.2]{moonen}.  
For degree $m$ cyclic covers $\phi: X \to {\mathfont{P}}^1$ branched at $B=\{0,1,\infty\}$, 
the following facts are well-known (e.g., \cite[Lemma 3.1]{LMPT1}):
\begin{itemize}
\item  
the Jacobian ${\mathfont{Jac}}(X)$ has complex multiplication by $\prod_{d} \QQ(\zeta_d)$, 
where the product is taken over all integers $d$ such that 
$1<d\mid m$ and $d\nmid a_b$ for any $b\in B$;
\item if $1 \leq j < m$, then
 $f_j \in \{0,1\}$,
and $f(j) + f(m-j) = 1$ provided $d_j$, the order of $j$ modulo $m$, does not divide $a_b$ for any $b \in B$.
\end{itemize}

\subsection{Newton polygons} 

If $X$ is a smooth curve over $k$, 
we describe the Newton polygon of its Jacobian ${\mathfont{Jac}}(X)$ in terms of 
the decomposition of the $p$-divisible group ${\mathfont{Jac}}(X)[p^\infty]$.

\begin{dfn}
For non-negative integers $c$ and $d$ with ${\mathrm{gcd}}(c,d)=1$, 
fix a $p$-divisible group $G_{c,d}$ of codimension $c$, dimension $d$, and thus height $c + d$. 
The slope of $G_{c,d}$ is $\lambda = d/(c + d)$, with multiplicity $c+d$.  Let $G_\lambda = G_{c,d}$.
\end{dfn}

Suppose $A$ is a principally polarized abelian variety of dimension $g$ over $k$.
By the Dieudonn\'e--Manin classification \cite{man63}, the 
$p$-divisible group $A[p^\infty]$ is isogenous to $\oplus_\lambda G_\lambda^{m_\lambda}$ for some 
multi-set of non-negative integers $m_\lambda$. 
This multi-set defines a symmetric Newton polygon of height $2g$ with integer break-points.
As a short-hand, we write $\nu(A) = \oplus_\lambda G_\lambda^{m_\lambda}$ 
and call both $\nu(A)$ and its visual representation the \emph{Newton polygon} of $A$.

The Newton polygon of a curve $X$ over $k$ is the Newton polygon of ${\mathfont{Jac}}(X)$.
A curve $X$ of genus $g$ is \emph{ordinary} if $\nu({\mathfont{Jac}}(X)) = (G_{0,1} \oplus G_{1,0})^g$ and is \emph{supersingular} if 
$\nu({\mathfont{Jac}}(X))=G_{1,1}^g$.  

\subsection{Newton polygons for cyclic covers branched at three points} \label{Scyclic2}

Suppose $\phi : X \to \mathfont{P}^1$ is a cyclic degree $m$ cover which is branched at three points. 
Assuming that $p$ does not divide $m$, we describe how to compute the Newton polygon of 
${\mathfont{Jac}}(X)$ from the signature type of $\phi$, 
using a formula due to Shimura and Taniyama, see \cite[\S 2,3]{LMPT1} for details.  
This implicitly relies on $X$ being the reduction of a curve over $\QQ$ whose Jacobian has complex multiplication.

\begin{notation} \label{NdefR}
Assume that $p$ does not divide $m$.
Let $R$ denote the set of orbits of the multiplication by $p$ map $[\times p]$ on the set $\{j \in \ZZ/m\ZZ : j \neq 0\}$.  
For each orbit $r \in R$, the additive order $d_j$ of $j$ modulo $m$ is the same for all values $j$ in $r$ and we denote it by $d_r$.
For $\epsilon \in \{0,1\}$, let
\begin{equation} \label{EdefS}
S_\epsilon=\{1 \leq j < m : d_j \text{ does not divide }a_b \text{ for } b \in B, \text{ and } f_j=\epsilon\}.
\end{equation}
For $r \in R$ such that $d_r$ does not divide $a_b$ for $b \in B$, 
set $\alpha_r=\#(r \cap S_1)$.
\end{notation}

In the context of Notation~\ref{NdefR}, 
the set $R$ is in one-to-one correspondence with the set of primes of the 
group ring $\QQ[\mu_m]$ above $p$.  This is because $\QQ[\mu_m]$ is unramified at $p$;
the elements of $\ZZ/m\ZZ$ can be identified with homomorphisms from $\QQ[\mu_m]$ to $\mathfont{C}$; and
there is a natural action of Frobenius on the set of homomorphisms, see \cite[\S2.1]{LMPT1} for details.
The $p$-divisible group ${\mathfont{Jac}}(X)[p^\infty]$ canonically decomposes
into pieces ${\mathfont{Jac}}(X)[p^\infty]_r$ indexed by the orbits $r \in R$. 

\begin{thm}[Shimura--Taniyama formula] \label{TSTf} \cite[Section 5]{Tate71}
The only slope of the Newton polygon of ${\mathfont{Jac}}(X)[p^\infty]_r$ is $\lambda_r := \alpha_r/\# r$.
\end{thm}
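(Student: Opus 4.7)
The plan is to reduce the statement to a direct application of Tate's formulation of the Shimura--Taniyama formula \cite[Section 5]{Tate71}. The work lies in packaging the complex multiplication structure on ${\mathfont{Jac}}(X)$ in the standard form required by the formula, and matching the combinatorial data $(R,\alpha_r,\#r)$ with arithmetic data at $p$ (primes of the CM algebra above $p$, their residue degrees, and the CM-type embeddings lying above each prime).

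First I would lift to characteristic zero. Since $p \nmid m$, the model \eqref{eq:cycliccover} defines a smooth projective curve $\widetilde{X}$ over a $p$-adic integer ring with good reduction equal to $X$; taking its generic fiber and embedding into $\mathfont{C}$, we obtain an abelian variety $\widetilde{J} = {\mathfont{Jac}}(\widetilde{X})$ over a number field whose reduction mod $p$ is ${\mathfont{Jac}}(X)$. The automorphism $y \mapsto \zeta_m y$ of $\widetilde{X}$ induces an action of $\QQ[\mu_m]$ on $\widetilde{J}$, factoring through the CM algebra $E = \prod_d \QQ(\zeta_d)$ described in Section~\ref{Scyclic1}, and realizing $\widetilde{J}$ as a CM abelian variety for $E$.

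Next I would identify the combinatorics. The embeddings $\QQ[\mu_m] \hookrightarrow \mathfont{C}$ are parametrized by $1 \le j < m$ via $\zeta_m \mapsto e^{2\pi i j/m}$; those that factor through $E$ are exactly the ones for which $d_j$ does not divide any $a_b$. The CM type $\Phi$ of $\widetilde{J}$ consists of those embeddings appearing in the holomorphic tangent space $H^0(\widetilde{X},\Omega^1_{\widetilde{X}})$, so by the signature formula recalled after Definition~\ref{Dsignature} we get $\Phi = S_1$. On the other side, since $p \nmid m$ the algebra $E$ is unramified at $p$, so its primes above $p$ are in bijection with the $\langle p \rangle$-orbits on the set of embeddings; this is the bijection $r \leftrightarrow \mathfrak{p}_r$ with residue degree $[E_{\mathfrak{p}_r}:\QQ_p] = \#r$. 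The CM decomposition of $\widetilde{J}[p^\infty]$ descends to the decomposition ${\mathfont{Jac}}(X)[p^\infty] = \bigoplus_r {\mathfont{Jac}}(X)[p^\infty]_r$ already introduced in the paragraph before the theorem.

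Finally I would invoke the formula. Shimura--Taniyama, in Tate's formulation, asserts that the unique slope of the $\mathfrak{p}_r$-component of $\widetilde{J}[p^\infty]$ (which specializes to ${\mathfont{Jac}}(X)[p^\infty]_r$) equals the proportion of the CM type lying above $\mathfrak{p}_r$, namely $\#(\Phi \cap r)/\#r = \alpha_r/\#r = \lambda_r$. The main subtlety to spell out is the identification $\Phi = S_1$ together with the compatibility of the $\mathfrak{p}_r$-isotypic decomposition on the $p$-divisible group with the orbit decomposition; both are standard once the CM lift is in place, after which the theorem follows from \cite[Section 5]{Tate71} by a direct matching of notation.
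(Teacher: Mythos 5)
Your proposal is correct and follows the same route the paper takes: the paper gives no proof of Theorem~\ref{TSTf}, citing \cite[Section 5]{Tate71} and \cite[\S 2,3]{LMPT1}, and explicitly notes that the argument ``relies on $X$ being the reduction of a curve over $\QQ$ whose Jacobian has complex multiplication.'' Your sketch simply fills in that citation in the standard way (CM lift, identification of the CM type with $S_1$ via the signature formula, orbits $r$ matching primes of $\QQ[\mu_m]$ above $p$), which is exactly the intended reading.
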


In particular, if every orbit is self-dual, meaning invariant under negation modulo $m$, then $X$ is supersingular.

\begin{exa} \label{Em8p3}
Let $m=8$ and $p \equiv 3 \bmod 4$.  The orbits are $r_1=\{1,3\}$, $r_2 =\{2,6\}$, $r_5=\{5,7\}$, and $r_4=\{4\}$.

If $a= (1,1,6)$, then $\phi$ has signature $f=(1,1,1,0,0,0,0)$.  
Thus $S_1=\{1,2,3\}$.
Then $\lambda_{r_1} =1$, $\lambda_{r_2} = 1/2$, and $\lambda_{r_5}=0$.
Thus $\nu(\mathfont{Jac}(X)) =(G_{0,1} \oplus G_{1,0})^2 \oplus G_{1,1}$. 

If $a = (1,5,2)$, then $\phi$ has signature $f=(1,1,0,0,1,0,0)$.
Thus $S_1=\{1,2,5\}$ and $X$ is supersingular.
\end{exa}

\begin{rem} There is another method to determine whether $X$ is supersingular by Yui \cite{Yui} and Aoki \cite{Aoki} 
using $p$-adic valuations of Jacobi sums.
By \cite[Theorem 1.1]{Aoki}, $X$ is supersingular if and only if either (i) $p^i \equiv -1 \bmod m$ for some $i$; or
(ii) there is an equivalence (as defined in Section~\ref{ss:types}) of inertia types
$a \approx (1, -p^i, p^i-1)$ for some $i$ such that $d=\mathrm{gcd}(p^i-1, m) > 1$ and $p^j \equiv -1 \bmod m/d$ for some $j$.  
\end{rem}

\subsection{Non-cyclic Galois groups and inertia types}

Suppose $\phi$ is abelian but not cyclic, of order $m$.  The following lemma determines the Galois group of $\phi$ in all the cases with $s>1$ for $5 \leq g \leq 10$.

\begin{lmm} \label{Lnotcyclic}
Let $z =[c_0,c_1,c_\infty,s]$ be a ramification type such that $s >1$, 
$s \mid c_\infty$, $c_\infty \mid c_1$, and $c_1 \mid c_0$.
Then $G$ is non-cyclic with $G \simeq \ZZ/c_0 \ZZ \times \ZZ/s \ZZ$.
\end{lmm}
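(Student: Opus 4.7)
The key observation is that although $G$ is generated by the three inertia generators $a_0, a_1, a_\infty$, the relation $a_0 a_1 a_\infty = \mathrm{Id}_G$ from Definition~\ref{dfn:inertialtype} makes $a_\infty = (a_0 a_1)^{-1}$ redundant. Thus $G = \langle a_0, a_1 \rangle$ is generated by only two elements, and by the structure theorem for finitely generated abelian groups it admits an invariant factor decomposition
\[
G \cong \ZZ/n_1\ZZ \times \ZZ/n_2\ZZ, \qquad n_1 \mid n_2
\]
(with the convention $n_1 = 1$ allowed in the cyclic case).

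Next I would pin down $n_1$ and $n_2$ by computing invariants. The exponent of $G$ equals $n_2$ and also equals the least common multiple of the orders of any generating set, so using the generators $a_0, a_1$ and the hypothesis $c_1 \mid c_0$ the exponent divides $\mathrm{lcm}(c_0, c_1) = c_0$; since $a_0$ itself has order exactly $c_0$, we conclude $n_2 = c_0$. The defining relation $|G| = s c_0$ then forces $n_1 = s$. The divisibility $n_1 \mid n_2$ required by the structure theorem reads $s \mid c_0$, which holds by the given chain $s \mid c_\infty \mid c_1 \mid c_0$.

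Finally, since $s > 1$ both cyclic factors are nontrivial, so $G \cong \ZZ/c_0\ZZ \times \ZZ/s\ZZ$ is non-cyclic, as claimed. The only substantive step is the $2$-generation observation coming from $a_0 a_1 a_\infty = \mathrm{Id}_G$; after that the structure theorem and the divisibility hypotheses slot in exactly where needed, and no obstacle of substance remains.
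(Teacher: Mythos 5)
Your proof is correct and follows essentially the same route as the paper: two-generation of $G$ (via $a_0a_1a_\infty=\mathrm{Id}$), the invariant factor decomposition, and the observation that the exponent equals $\mathrm{lcm}(c_0,c_1)=c_0$, which together with $|G|=sc_0$ pins down the factors as $\ZZ/c_0\ZZ\times\ZZ/s\ZZ$. The only cosmetic difference is that the paper first rules out cyclicity separately (by exhibiting a prime dividing $s$ and hence all of $c_0s/c_b$), whereas you obtain non-cyclicity as a byproduct of $n_1=s>1$ with $s\mid c_0$.
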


\begin{proof}
By hypothesis, 
there is a prime $\ell$ dividing all three of $s=c_0s/c_0$, $c_0s/c_1$, and $c_0s/c_\infty$.  
If $G$ was cyclic, $\ell$ would divide each of $m = c_0 s, a_0, a_1, a_\infty$, 
contradicting the fact that $a_0, a_1, a_\infty$ generate $G$.  
Since $G$ is generated by two elements $a_0, a_1$, this implies that 
$G \simeq \ZZ/r_1 \ZZ \times \ZZ/r_2 \ZZ$ where $r_1r_2 = c_0s$ and $r_2 \mid r_1$.
Note that $r_1$ is a multiple of $c_0$ since $a_0$ has order $c_0$.
The hypothesis on $z$ implies that the orders of $a_1$ and $a_\infty$ divide $c_0$.
So $a_0$ and $a_1$ generate $G$ only if $r_1 = c_0$.
Thus $G \simeq \ZZ/c_0 \ZZ \times \ZZ/s \ZZ$.
\end{proof}

For $s >1$, we denote elements of $G = \ZZ/c_0 \ZZ \times \ZZ/s \ZZ$ as an ordered pair.
Each $a_b$ in the inertia type $a=(a_0,a_1,a_\infty)$ is written as $a_b = (a_{b,1}, a_{b,2})$.
To choose a representative for the equivalence class of $a$, 
we take the natural inclusion of $G$ in $V=(\ZZ/c_0 \ZZ)^2$. 
Since $\mathrm{Aut}(V)$ acts transitively
on pairs of linearly independent elements of order $c_0$, 
we may choose $a_0 = (1,0)$ without loss of generality.
When $c_\infty =s$, we also choose $a_\infty =(0,1)$.

\subsection{Decomposition from the Kani--Rosen theorem} \label{Skaniroshen}

Suppose $\phi: X \to \mathfont{P}^1$ is an abelian non-cyclic cover branched at $B=\{0,1,\infty\}$.
It is possible to generalize Sections~\ref{Scyclic1} and \ref{Scyclic2} to the 
abelian non-cyclic case.  This would take us too far afield so we use the Kani--Rosen theorem.

The goal is to decompose $\mathrm{Jac}(X)$, up to isogeny, as the product 
$\prod_{i \in I} J_i$ of abelian varieties $J_i$ of smaller dimension.  
Each abelian variety $J_i$ in this decomposition is the Jacobian of a curve $Z_i$, such that $Z_i$ is a quotient of $X$ 
by a subgroup $H_i \subset G$.  

\begin{thm}[{Kani--Rosen \cite[Theorem~B]{kanirosen}}] \label{TKR} 
Let $G, H_1, \ldots, H_t \subset \mathrm{Aut}(X)$ be such that $G =\cup_{i=1}^t H_i$ and 
$H_i \cap H_j = \{\mathrm{id}\}$ for $i \not = j$.  Let $n_G = \#G$ and $n_i = \#H_i$ for $1 \leq i \leq t$.
Then there is an isogeny:
\begin{equation} \label{Ekanirosen}
\mathrm{Jac}(X)^{t-1} \times \mathrm{Jac}(X/G)^{n_G} \sim \prod_{i=1}^t \mathrm{Jac}(X/H_i)^{n_i}. 
\end{equation}
\end{thm}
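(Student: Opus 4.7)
The plan is to derive the isogeny from an idempotent identity in the rational group algebra $\QQ[G]$, viewed inside the endomorphism algebra $\mathrm{End}^0(\mathrm{Jac}(X)) := \mathrm{End}(\mathrm{Jac}(X)) \otimes \QQ$ via the natural action induced by $G \subset \mathrm{Aut}(X)$. For any subgroup $H \subset G$, the averaging element $\epsilon_H := \tfrac{1}{|H|} \sum_{h \in H} h$ is an idempotent, and the standard fact that pullback along $X \to X/H$ induces an isogeny $\mathrm{Jac}(X/H) \sim \epsilon_H \mathrm{Jac}(X)$ lets us translate algebraic identities among such idempotents into geometric isogenies among Jacobians of quotient curves.

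The first concrete step is an inclusion--exclusion computation in $\QQ[G]$: because $G = \bigcup_{i=1}^t H_i$ with $H_i \cap H_j = \{\mathrm{id}\}$ for $i \neq j$, each non-identity element of $G$ lies in exactly one $H_i$, while $\mathrm{id}$ lies in all $t$ of them. Summing yields $\sum_{i=1}^t \sum_{h \in H_i} h = \sum_{g \in G} g + (t-1)\,\mathrm{id}$, which rewrites as the key idempotent relation
\begin{equation} \label{Eidempot}
\sum_{i=1}^t n_i\, \epsilon_{H_i} = (t-1)\,\mathrm{id} + n_G\, \epsilon_G.
\end{equation}

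The remaining step invokes the general Kani--Rosen principle: an integer linear relation among idempotents in $\mathrm{End}^0(A)$, after separating positive and negative coefficients onto opposite sides, yields an isogeny between products of the corresponding idempotent images, each factor taken with multiplicity equal to the absolute value of its coefficient. Applied to \eqref{Eidempot} together with $\epsilon_H \mathrm{Jac}(X) \sim \mathrm{Jac}(X/H)$ and $\mathrm{id}\cdot\mathrm{Jac}(X) = \mathrm{Jac}(X)$, this gives exactly \eqref{Ekanirosen}. The main obstacle is justifying this general principle; the cleanest route is Poincar\'e complete reducibility, which decomposes $\mathrm{Jac}(X) \sim \prod_\alpha B_\alpha^{e_\alpha}$ into pairwise non-isogenous simple factors $B_\alpha$ and identifies $\mathrm{End}^0(\mathrm{Jac}(X)) \cong \prod_\alpha M_{e_\alpha}(D_\alpha)$. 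Taking the reduced trace in each matrix factor on both sides of \eqref{Eidempot} then equates the multiplicities with which each $B_\alpha$ occurs in the two abelian varieties appearing in \eqref{Ekanirosen}, which by uniqueness of the isogeny decomposition forces those two abelian varieties to be isogenous.
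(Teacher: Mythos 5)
The paper gives no proof of this statement: it is quoted verbatim from Kani--Rosen, so there is no internal argument to compare against. Your proof is correct, and it is in essence the original Kani--Rosen derivation of their Theorem~B from their Theorem~A. The counting step is right: since $G=\cup_{i=1}^t H_i$ and $H_i\cap H_j=\{\mathrm{id}\}$, every non-identity element of $G$ lies in exactly one $H_i$ while $\mathrm{id}$ lies in all $t$, giving the identity $\sum_{i=1}^t n_i\,\epsilon_{H_i}=(t-1)\,\mathrm{id}+n_G\,\epsilon_G$ as honest elements of $\QQ[G]\subset \mathrm{End}(\mathrm{Jac}(X))\otimes\QQ$. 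Your justification of the passage from this integral idempotent relation to the isogeny is also sound: writing $\mathrm{End}(\mathrm{Jac}(X))\otimes\QQ\cong\prod_\alpha M_{e_\alpha}(D_\alpha)$ by Poincar\'e reducibility, the reduced trace in each factor is linear and, on idempotents, determines the multiplicity of the simple factor $B_\alpha$ in the image; equating these multiplicities on both sides and invoking uniqueness of the isogeny decomposition yields \eqref{Ekanirosen}. The two facts you take as standard --- that $\pi^*$ identifies $\mathrm{Jac}(X/H)$ up to isogeny with the image of $\epsilon_H$ (equivalently of the norm map $\sum_{h\in H}h$), and uniqueness of the decomposition into simple factors --- are indeed standard, so I see no gap.
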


\begin{exa}
Suppose $G \simeq (\ZZ/2\ZZ)^2$ and let $H_1, H_2, H_3 \subset G$ be the three subgroups of order two.  
Then \eqref{Ekanirosen} simplifies to $\mathrm{Jac}(X) \times \mathrm{Jac}(X/G)^2 \sim \prod_{i=1}^3\mathrm{Jac}(X/H_i)$.
Suppose $G \simeq (\ZZ/3\ZZ)^2$ and let $H_1, H_2, H_3, H_4 \subset G$ be the four subgroups of 
order three.
Then \eqref{Ekanirosen} simplifies to $\mathrm{Jac}(X) \times \mathrm{Jac}(X/G)^3 \sim \prod_{i=1}^4\mathrm{Jac}(X/H_i)$.
\end{exa}

\subsection{Enumerating and analyzing abelian covers}

Using SageMath, we automated the search for abelian covers $\phi: X \to \mathfont{P}^1$ over $\QQ$ branched at $B=\{0,1,\infty\}$ whose genus lies in a specified range.  
We then partially automated the determination of the set of primes $p$ for which the reduction of $X$ modulo $p$ is supersingular.
We outline the approach here: the code is on github \cite{github}.

We begin by listing all possible $e_{0,1},e_{0,\infty}, e_{1,\infty}, r, s$ which satisfy equation \eqref{Eformulag} for $g$ in the specified range.  These determine the potential ramification types of covers. 
  
For a potential ramification type with $s=1$, the cover $\phi$ is cyclic and $G = \ZZ/m\ZZ$.  
We look for inertia types compatible with the ramification type.
Without loss of generality, $a_0 = 1$, and we search for $a_1$ and $a_\infty$ with $|a_1|=c_1$, $|a_\infty| = c_\infty$, and $a_0 + a_1 + a_\infty = 0 \mod{m}$.  Then \eqref{eq:cycliccover} gives a cover with that inertia type.
Finally, the program determines the congruence classes of primes $p$ for which the reduction of $X$ modulo $p$
is supersingular, 
using the Shimura--Taniyama formula found in Theorem~\ref{TSTf}.
  
When $s >1$, Lemma~\ref{Lnotcyclic} implies that $\phi$ is non-cyclic 
and that $G \simeq (\ZZ/c_0 \ZZ) \times (\ZZ/s \ZZ)$.
Again, we look for inertia types compatible with the ramification type.
Without loss of generality, we identify the potential generator of $I_0$ with $a_0 =(1,0)$ and search for $a_1$ and $a_\infty$ satisfying the constraints listed after Definition~\ref{dfn:inertialtype}.  
Note that an inertia type realizing a potential ramification type may not exist, in which case there is no 
cover $\phi$ realizing that ramification type; see Remark~\ref{Rnovalid} for examples.

In the small number of non-cyclic cases with $5 \leq g \leq 10$, for a curve $X$ with inertia type $a$, we determine by hand 
a decomposition of $\mathfont{Jac}(X)$ into 
Jacobians of curves that are cyclic covers of $\mathfont{P}^1$ branched at $B$, 
using the Kani--Rosen method as in Section~\ref{Skaniroshen}.  
The program then gives us the primes for which the reduction of $X$ is supersingular.

In Section~\ref{Searlier}, we give a complete analysis of the genus $5$ situation to illustrate the methods.  
Section~\ref{Sdatacyclic} contains the results of our program for cyclic cases when $6 \leq g \leq 10$.
Section~\ref{Sdatanoncyclic} treats the non-cyclic cases when $6 \leq g \leq 10$.  

\section{Refining Ekedahl's Results for genus five} \label{Searlier} \label{Section4}

Suppose $\phi: X \to \mathfont{P}^1$ is a tamely ramified abelian cover over $k$ branched at $B=\{0,1,\infty\}$.
In \cite[page~173]{Ekedahl87}, Ekedahl enumerates ways $\phi$ can ramify so that $X$ has genus $5$.
There are seven non-equivalent cases, as confirmed by our program, which
are listed in Propositions~\ref{Pekedahl5cyclic}, \ref{Pekedahl5notcyclica}, 
and \ref{Pekedahl5notcyclicb}. 

Ekedahl studies the situation when $X$ is superspecial, meaning that $\mathrm{Jac}(X)$ is 
isomorphic to a product of supersingular elliptic curves.  The superspecial condition is equivalent to the Frobenius operator being trivial on $H^1(X, \mathcal{O}_X)$.  
For each of the seven cases, 
Ekedahl determines the primes $p$ for which $X$ is superspecial, thus
proving that there exists a superspecial smooth curve of genus $5$ which is an abelian cover of $\mathfont{P}^1$ 
branched at $3$ points if and only if 
$p \equiv -1 \bmod 8, 11,12,15,20$ or $p \equiv 11 \bmod 15$.

If $X$ is superspecial, then it is supersingular, but the converse 
is not always true.
To determine the (less restrictive) conditions on $p$ for which $X$ is 
supersingular, we use the Shimura--Taniyama formula.

\subsection{Cyclic cases for genus five}

We use the abbreviation QNR for quadratic non-residue.

\begin{prp} \label{Pekedahl5cyclic}
If $\phi: X \to \mathfont{P}^1$ is a cyclic cover branched at $3$ points
and $X$ has genus $5$, then its ramification type $z$ and set of supersingular primes are one of:
\[
\begin{tabular} {|l|l|l|}
\hline
Case & z  & supersingular primes \\
\hline
$1$ & $[11,11,11,1]$ & $p \equiv \text{QNR} \bmod 11$ \\
\hline
$2$ & $[15,15,3,1]$ & $p \equiv -1, 11 \bmod 15$ \\
\hline
$3$ & $[22,11,2,1]$ & $p \equiv \text{QNR} \bmod 11$ \\
\hline
$4$ & $[12,12,6,1]$ & $p \equiv -1 \bmod 12$ \\
\hline
$5$ & $[20,20,2,1]$ & $p \equiv -1, 11 \bmod 20$ \\
 \hline
\end{tabular}
\]
\end{prp}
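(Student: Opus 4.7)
The plan is to apply the framework of Section~\ref{Scyclic2}: enumerate cyclic ramification types for $g = 5$, compute signatures, and apply the Shimura--Taniyama formula.

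First, I would solve equation \eqref{Eformulag} with $s = 1$ and $2g-2 = 8$, searching over tuples $(r, e_{0,1}, e_{0,\infty}, e_{1,\infty})$ satisfying the pairwise coprimality and ordering constraints of Section~\ref{ss:types}. Because $r e_{0,1} e_{0,\infty}$ is bounded in terms of $g$, this is a finite search, and it produces exactly the five ramification types listed (this is Ekedahl's enumeration restricted to cyclic $G$, as confirmed by the authors' SageMath code).

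Next, for each type with $m = c_0$, I would determine the inertia types up to the equivalence of Section~\ref{ss:types}: normalize $a_0 = 1$ by multiplying by a unit in $(\ZZ/m\ZZ)^*$, then solve for $a_1, a_\infty$ from the order constraints $|a_b| = c_b$ and the relation $a_0 + a_1 + a_\infty \equiv 0 \pmod m$. From the inertia type, compute the signature $f = (f_1, \ldots, f_{m-1})$ via $f_j = -1 + \sum_{b \in B} \langle -j a_b/m \rangle$.

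Third, I would apply Theorem~\ref{TSTf} to characterize supersingularity. The curve $X$ is supersingular iff every slope $\lambda_r = \alpha_r / \#r$ equals $1/2$ for each orbit $r \in R$ of $[\times p]$ on the elements whose order $d_j$ does not divide any $a_b$. Using the symmetry $f_j + f_{m-j} = 1$ for such $j$, this slope condition is equivalent to requiring each relevant orbit to be closed under negation modulo $m$; equivalently, $-1 \in \langle p \rangle$ in the appropriate quotient of $(\ZZ/m\ZZ)^*$.

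Finally, I would translate into the stated congruence conditions. For Case~1 ($m=11$), $(\ZZ/11\ZZ)^*$ is cyclic of order $10$, so $-1 \in \langle p \rangle$ iff $p$ has even order in this group iff $p$ is a quadratic non-residue mod $11$. For Case~3 ($m=22$), the inertia type satisfies $11 \mid a_\infty$, so orbits of elements whose order divides $11$ are excluded and only orbits in $(\ZZ/22\ZZ)^* \cong (\ZZ/11\ZZ)^*$ remain, giving the same QNR condition. Cases~2, 4, and 5 require analogous analysis on $(\ZZ/15\ZZ)^*$, $(\ZZ/12\ZZ)^*$, and $(\ZZ/20\ZZ)^*$ respectively. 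The main obstacle is the bookkeeping for these composite moduli---identifying which orbits are excluded by the divisibility condition on $d_j$ and checking that the listed congruence classes are exactly those for which every remaining orbit is self-dual under negation.
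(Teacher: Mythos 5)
Your overall framework (enumerate the cyclic ramification types via \eqref{Eformulag}, normalize $a_0=1$, compute the signature, apply Theorem~\ref{TSTf}) matches the paper's. But there is a genuine gap in your third step: the slope condition $\alpha_r/\#r = 1/2$ for every relevant orbit is \emph{not} equivalent to every relevant orbit being closed under negation modulo $m$. Self-duality of an orbit is sufficient (pair $j$ with $-j$ and use $f_j+f_{m-j}=1$), but it is not necessary: an orbit that is not self-dual can still contain exactly half its elements in $S_1$, and this actually happens in the cases at hand. The paper flags this explicitly: ``When $m$ is composite, there can be additional cases where $\alpha_r/\#r=1/2$ for each $r\in R$.''

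Concretely, in Case~2 take $a\approx(1,4,10)$, so $S_1=\{1,2,4,5,8\}$, and let $p\equiv 11\bmod 15$. Then $\langle p\rangle=\{1,11\}$ in $(\ZZ/15\ZZ)^*$, which does not contain $-1\equiv 14$, so no orbit of $[\times p]$ is self-dual; yet each orbit ($\{1,11\},\{2,7\},\{4,14\},\{8,13\},\{5,10\}$) meets $S_1$ in exactly one element, so every slope is $1/2$ and $X$ is supersingular. The same phenomenon gives $p\equiv 11\bmod 20$ in Case~5. Your criterion ``$-1\in\langle p\rangle$ in the appropriate quotient'' would therefore omit exactly the congruence classes $11\bmod 15$ and $11\bmod 20$ that appear in the proposition; no choice of quotient of $(\ZZ/m\ZZ)^*$ repairs this, because whether a non-self-dual orbit has slope $1/2$ depends on the signature, not just on group theory. (Your reduction is harmless for $m=11$ and $m=22$, where the relevant unit group is cyclic so any even-order subgroup contains $-1$ and odd-size orbits can never have slope $1/2$.) The fix is to run Theorem~\ref{TSTf} honestly for composite $m$: for each congruence class of $p$, list the orbits surviving the condition on $d_j$, count $\alpha_r=\#(r\cap S_1)$ from the computed signature, and check all slopes equal $1/2$ --- both to capture the extra classes and to rule out the remaining ones.
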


\begin{proof}
By \cite[page~173]{Ekedahl87}, these are the only options for $z$ when $\phi$ is cyclic and $X$ has genus $5$.  We analyze these covers the same way our program does to illustrate how it works.
In each case, $G= \ZZ/m \ZZ$ with $m=c_0$ and $\phi$ is totally ramified over $b=0$.
A short calculation determines the choices for the inertia type $a$, with $a_0 = 1$. 

Recall that $R$ denotes the set of orbits of $\times p$ on $\ZZ/m\ZZ$.
If $p^i \equiv -1 \bmod m$ for some $i$, then each orbit is self-dual and so $X$ is supersingular.
When $m$ is composite, there can be additional cases where $\alpha_r/\#r = 1/2$ for each $r \in R$.  
Here are some details.

\begin{enumerate}
\item %Note that $a \approx (1,k,-(1+k))$ for some $1 \leq k \leq 9$.  
This is one case of \cite[Theorem~5.1]{LMPT1}.

\item 
%$a=(1,k, -(1+k))$ where $\mathrm{gcd}(k,15)=1$ and $\mathrm{gcd}(k+1,15)=5$.  
Note that $a \approx (1,4,10)$.
Then $S_1 = \{1,2,4,5,8\}$.  
When $p \equiv 11 \bmod 15$, then $X$ is supersingular.
%This can also be verified using Aoki's result by noting that $(1,4,10) \sim (1, -p, p -1)$ where $d=\mathrm{gcd}(p-1, 15)=5>1$
%and $p \equiv -1 \bmod m/d$.
This can be verified using the Shimura--Taniyama formula because $\#r = 2$ for each $r \in R$ 
and each orbit contains exactly one element of $S_1$.  
The only other case when $X$ is supersingular is when $p \equiv -1 \bmod{15}$.

\item Then $a \approx (1,10, 11)$.  The condition $p^i \equiv -1 \bmod 22$ is solvable for some $i$ 
if and only if $p$ is a quadratic non-residue modulo $11$.

\item Then $a \approx (1,1,10)$ and $S_1=\{1,2,3,4,5\}$.
The only case when $X$ is supersingular is when $p \equiv -1 \bmod 12$.

 \item Then $a \approx (1,9,10)$ and $S_1 = \{1,3,5,7,9\}$.
When $p \equiv 11 \bmod 20$, then $\#r = 2$ for each $r \in R$ 
and each orbit contains exactly one element of $S_1$. 
By the Shimura--Taniyama formula, $X$ is supersingular (but not superspecial).
%This can also be verified with Aoki's result by noting that $(1,9,10) \sim (1, -p, p -1)$ where $d=\mathrm{gcd}(p-1, 20)=10>1$
%and $p \equiv -1 \bmod m/d$.
\end{enumerate}
\end{proof}

\subsection{Non-cyclic cases for genus five} \label{Snoncyclic5}

Suppose $\phi: X \to \mathfont{P}^1$ is an abelian non-cyclic cover over $k$
branched at $B=\{0,1,\infty\}$.  When $X$ has genus $5$,
Ekedahl shows there are two options for the ramification type: either (6) $z=[8,8,4,2]$ or (7) $z=[12,12,2,2]$.

\subsubsection{Case (6)}
When $z=[8,8,4,2]$, then $G$ has order $16$.
By Lemma~\ref{Lnotcyclic}, $G \simeq \ZZ/8 \ZZ \times \ZZ/2 \ZZ$.
Let $H \subset G$ be the subgroup generated by $(4,0)$ and $(0,1)$. 
Let $E_{1728}$ be the elliptic curve with $j$-invariant $1728$, with 
affine equation $y^2=x^3-x$.

\begin{lmm} \label{Lekedahl5notcyclica}
When $z=[8,8,4,2]$, then $X/H$ is isomorphic to $E_{1728}$.  Also:
\begin{enumerate}
%\item the quotient of $X$ by $H$ is isomorphic to $E_{1728}$;
\item the $H$-Galois subcover $X \to E_{1728}$ factors through three curves $E'$, $W$ and $W'$, 
where $E' \to E_{1728}$ is a $2$-isogeny, and $W$ and $W'$ have genus $3$; and
\item there is an isogeny ${\mathfont{Jac}}(X) \times {\mathfont{Jac}}(E_{1728}) \sim {\mathfont{Jac}}(W) \times {\mathfont{Jac}}(W')$.
\end{enumerate}
\end{lmm}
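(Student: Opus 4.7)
The plan is first to pin down the inertia type of $\phi\colon X\to\mathfont{P}^1$ and then use it to identify each quotient appearing in the lemma. Normalizing $a_0=(1,0)$, the constraints $|a_1|=8$, $|a_\infty|=4$, $a_0+a_1+a_\infty=0$, and that $\{a_0,a_1\}$ generates $G=\ZZ/8\ZZ\times\ZZ/2\ZZ$ force $a_{1,2}=1$ and $a_{1,1}\in\{1,5\}$; the automorphism of $G$ fixing $(1,0)$ and sending $(0,1)\mapsto(4,1)$ exchanges these two possibilities, so we may take $a=((1,0),(1,1),(6,1))$. This gives explicit inertia subgroups $I_0=\langle(1,0)\rangle$, $I_1=\langle(1,1)\rangle$, and $I_\infty=\langle(6,1)\rangle=\{(0,0),(2,1),(4,0),(6,1)\}$.

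Note that $H=\langle(4,0),(0,1)\rangle$ equals $G[2]$, so $G/H$ is cyclic of order $4$ and the images of $a_0,a_1,a_\infty$ are $\bar 1,\bar 1,\bar 2$. Thus $X/H\to\mathfont{P}^1$ is the cyclic degree-$4$ cover with inertia type $(1,1,2)$, given affinely by $y^4=x(1-x)$. Riemann--Hurwitz yields $g(X/H)=1$, and the order-$4$ automorphism $y\mapsto \zeta_4 y$ of this curve fixes $(x,y)=(0,0)$, so it is an automorphism of the corresponding elliptic curve fixing a point. Only the elliptic curve with $j$-invariant $1728$ admits such an automorphism, so $X/H\simeq E_{1728}$.

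Next I would analyze the three order-$2$ subgroups $H_1=\langle(4,0)\rangle$, $H_2=\langle(0,1)\rangle$, and $H_3=\langle(4,1)\rangle$ of $H$. A direct computation shows $I_b\cap H=H_1$ for \emph{every} $b\in B$. In the Galois cover $X/H_i\to X/H$ with group $H/H_i$, the inertia above a point of $X/H$ lying over $b$ is $(I_b\cap H)H_i/H_i$. For $i=1$ this is trivial, so $X/H_1\to X/H$ is unramified and $E':=X/H_1\to E_{1728}$ is a $2$-isogeny. For $i=2,3$, since $H_1\not\subset H_i$, the degree-$2$ cover $X/H_i\to E_{1728}$ is ramified above every point of $E_{1728}$ lying over $B$; these consist of one point each over $0$ and $1$ and two points over $\infty$, so by Riemann--Hurwitz $g(X/H_i)=3$. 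Setting $W=X/H_2$ and $W'=X/H_3$ finishes~(1).

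For (2), apply the Kani--Rosen theorem (Theorem~\ref{TKR}) with $H\simeq(\ZZ/2\ZZ)^{2}$ and its three order-$2$ subgroups to obtain
\[
\mathrm{Jac}(X)^{2}\times\mathrm{Jac}(E_{1728})^{4}\sim\mathrm{Jac}(E')^{2}\times\mathrm{Jac}(W)^{2}\times\mathrm{Jac}(W')^{2}.
\]
Since the $2$-isogeny $E'\to E_{1728}$ gives $\mathrm{Jac}(E')\sim\mathrm{Jac}(E_{1728})$, Poincar\'e complete reducibility lets us cancel and take a square root in the semisimple category of abelian varieties up to isogeny, yielding the stated relation. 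The main obstacle is the ramification bookkeeping: it is the uniform equality $I_b\cap H=H_1$ across all three branch points that singles out $H_1$ among the three order-$2$ subgroups of $H$, making $E'\to E_{1728}$ an isogeny and forcing the other two intermediate quotients to have genus $3$.
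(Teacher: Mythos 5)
Your proof is correct and follows essentially the same route as the paper: pin down the inertia type $a=((1,0),(1,1),(6,1))$, identify $X/H$ with $E_{1728}$ via the order-$4$ automorphism, compute the genera of the three intermediate quotients, and apply Kani--Rosen together with semisimplicity to cancel $\mathrm{Jac}(E')\sim\mathrm{Jac}(E_{1728})$. The only cosmetic difference is that you track ramification of $X/H_i\to X/H$ via $(I_b\cap H)H_i/H_i$, whereas the paper checks directly whether the generator of $H_i$ lies in each $I_b$ and applies Riemann--Hurwitz to $X\to X/H_i$; both computations agree.
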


\begin{proof}
When $z=[8,8,4,2]$, then $G \simeq \ZZ/8 \ZZ \times \ZZ/2 \ZZ$.
After modifying by an element of $\mathfont{Aut}(G)$, the
inertia type can be chosen to be $a=((1,0), (1,1), (6,1))$.
%To see this, without loss of generality, we can suppose that $(1,0)$ is the generator of $I_0$;
%then a generator of $I_1$ has the form $(r,1)$ for some odd integer $r$; 
%the condition that $-(r+1, 1)$ has order $4$ implies that $r=5$ or $r=1$. 
%These two inertia types are equivalent.

The $G$-cover $X \to {\mathfont P}^1$ has:
$2$ points above $0$ with $I_0 = \langle (1,0) \rangle$;
$2$ points above $1$ with $I_1 = \langle (1,1) \rangle$; and
$4$ points above $\infty$ with $I_\infty = \langle (6, 1) \rangle$.

Note that $G/H \simeq \ZZ/4 \ZZ$.
So $\psi: X/H \to {\mathfont{P}}^1$ is a $\ZZ/4\ZZ$-cover branched at $B$.
The intersection of $H$ with each of $I_0$, $I_1$, and $I_\infty$ has order $2$, so
%So there are respectively $1$, $1$, and $2$ points of $X/H$ above $0$, $1$, and $\infty$.  
%By the Riemann--Hurwitz formula, 
$X/H$ has genus $1$.  Since $\psi$ is totally ramified above $0$ and $1$,
this implies that $X/H \cong E_{1728}$.

\begin{enumerate}
\item Let $H_1:=\langle (4,0) \rangle$.
Since $(4,0)$ is in the inertia group at every point of $X$ above $0$, $1$, and $\infty$, 
the double cover $X \to X/{H_1}$ is branched at $8$ points.
So $X/{H_1}$ is an elliptic curve, which we denote $E'$.  
The double cover $E' \to E_{1728}$ is unramified and is thus a $2$-isogeny.
Let $W$ be the quotient of $X$ by $H_2:=\langle (0,1) \rangle$.
Since $H_2 \cap I_b$ is trivial for each $b \in B$, 
the double cover $X \to W$ is unramified and $W$ has genus $3$.
The same holds for the quotient $W'$ of $X$ by $H_3:=\langle (4,1) \rangle$.

\item By Theorem~\ref{TKR},
${\mathfont{Jac}}(X) \times {\mathfont{Jac}}(E_{1728})^2 \sim {\mathfont{Jac}}(E') \times {\mathfont{Jac}}(W) \times {\mathfont{Jac}}(W')$. 
The result follows since the category of abelian varieties up to isogeny is semi-simple, and ${\mathfont{Jac}}(E_{1728}) \sim {\mathfont{Jac}}(E')$.
\end{enumerate}
\end{proof}

\begin{prp} \label{Pekedahl5notcyclica}
Suppose $z=[8,8,4,2]$.  Then $X$ is supersingular if and only if $p \equiv 7 \bmod 8$.
If $p \equiv 3 \bmod 8$, then $\nu({\mathfont{Jac}}(X))=(G_{0,1} \oplus G_{1,0})^2 \oplus G_{1,1}^3$.
%the $p$-divisible group of ${\mathfont{Jac}}(X)$ is isogenous to $(G_{0,1} \oplus G_{1,0})^2 \oplus G_{1,1}^3$.
\end{prp}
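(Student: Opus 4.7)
The plan is to leverage the isogeny decomposition from Lemma~\ref{Lekedahl5notcyclica}(2),
\[
\mathfont{Jac}(X) \times \mathfont{Jac}(E_{1728}) \sim \mathfont{Jac}(W) \times \mathfont{Jac}(W'),
\]
together with the Shimura--Taniyama formula applied to the two cyclic covers $W$ and $W'$. Since the category of abelian varieties up to isogeny is semisimple, Newton polygons are additive and cancellable across such isogenies, so once the Newton polygons of $W$, $W'$, and $E_{1728}$ are known, the Newton polygon of $X$ follows by subtraction. Recall that $E_{1728}$ is supersingular exactly when $p \equiv 3 \pmod{4}$.

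The first step is to identify $W$ and $W'$ as cyclic $\ZZ/8\ZZ$-covers of $\mathfont{P}^1$ branched at $B$, reading off their inertia types by projecting the inertia type $((1,0),(1,1),(6,1))$ of $\phi$ through the quotient maps $G \to G/H_i$. Projection on the first coordinate identifies $G/H_2 \simeq \ZZ/8\ZZ$, giving inertia type $(1,1,6)$ for $W \to \mathfont{P}^1$. For $W' = X/H_3$, the relation $(0,1) \equiv (4,0) \pmod{H_3}$ shows that $(a,b) \mapsto a + 4b$ identifies $G/H_3 \simeq \ZZ/8\ZZ$, giving inertia type $(1,5,2)$ for $W' \to \mathfont{P}^1$. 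These are precisely the two inertia types analyzed in Example~\ref{Em8p3}.

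Now Theorem~\ref{TSTf} determines $\nu(\mathfont{Jac}(W))$ and $\nu(\mathfont{Jac}(W'))$ by a case analysis on $p \bmod 8$ (only $p \neq 2$ need be considered, since $p \nmid 16$). When $p \equiv 7 \pmod{8}$, every nonzero orbit of $\times p$ on $\ZZ/8\ZZ$ is self-dual, so $W$ and $W'$ are both supersingular; combined with $E_{1728}$ supersingular, cancelling $G_{1,1}$ from both sides of the isogeny yields $\mathfont{Jac}(X) \sim G_{1,1}^5$. When $p \equiv 3 \pmod{8}$, Example~\ref{Em8p3} supplies $\nu(\mathfont{Jac}(W)) = (G_{0,1} \oplus G_{1,0})^2 \oplus G_{1,1}$ and $\nu(\mathfont{Jac}(W')) = G_{1,1}^3$; $E_{1728}$ is still supersingular, so cancelling $G_{1,1}$ delivers the stated Newton polygon for $X$.

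For the ``only if'' direction, when $p \equiv 1 \pmod{4}$ the curve $E_{1728}$ is ordinary, and direct Shimura--Taniyama computations on the inertia types $(1,1,6)$ and $(1,5,2)$ show that $\mathfont{Jac}(W)$ and $\mathfont{Jac}(W')$ contribute slopes $0$ and $1$ whose total multiplicity exceeds $2$, so after subtracting the ordinary part $\mathfont{Jac}(E_{1728}) \sim G_{0,1} \oplus G_{1,0}$ there remain non-supersingular slopes in $\nu(\mathfont{Jac}(X))$. The main technical step is the correct identification of the inertia types of $W$ and $W'$ from the choice of isomorphisms $G/H_i \simeq \ZZ/8\ZZ$; once this is in hand, the rest is bookkeeping with Theorem~\ref{TSTf} and cancellation in the semisimple isogeny category.
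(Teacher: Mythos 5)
Your proof is correct and follows essentially the same route as the paper's: the Kani--Rosen decomposition of Lemma~\ref{Lekedahl5notcyclica}, the identification of $W$ and $W'$ as $\ZZ/8\ZZ$-covers of $\mathfont{P}^1$ with inertia types $(1,1,6)$ and $(1,5,2)$, and the Shimura--Taniyama analysis of Example~\ref{Em8p3}. The only cosmetic difference is in the ``only if'' direction, where the paper simply observes that $E_{1728}$ is a quotient of $X$, so $X$ supersingular forces $p \equiv 3 \bmod 4$, while you instead recompute the slopes of $W$ and $W'$ for $p \equiv 1 \bmod 4$; both arguments are valid.
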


\begin{proof}
By Lemma~\ref{Lekedahl5notcyclica}, $E_{1728}$ is a quotient of $X$. 
We restrict to the case $p \equiv 3 \bmod 4$ which is equivalent to $E_{1728} \sim E'$ being supersingular. 
In this case, $X$ is supersingular if and only if
$W$ and $W'$ are both supersingular, by Lemma~\ref{Lekedahl5notcyclica}.

The cover $W \to \mathfont{P}^1$ has Galois group $\ZZ/8\ZZ$ and is branched above $B$ with inertia type 
$\overline{a}=(1,1,6)$.
The cover $W' \to \mathfont{P}^1$ has Galois group $\overline{G}=G/\langle (4,1) \rangle \simeq \ZZ/8 \ZZ$, 
which is generated by $z = (1,1)$.
We compute its inertia type $\overline{a}' =(5z, z, 2z) \approx (1,5,2)$.

If $p \equiv 7 \bmod 8$, then every orbit of $\times p$ on $\{1 \leq j < 8\}$ 
is self-dual, and so both $W$ and $W'$ are supersingular by Theorem~\ref{TSTf}.
Suppose $p \equiv 3 \bmod 8$.  By Example~\ref{Em8p3}, 
the curve $W'$ is supersingular since $\overline{a}' \approx (1,5,2)$, 
while $\nu(\mathfont{Jac}(W)) = (G_{0,1} \oplus G_{1,0})^2 \oplus G_{1,1}$ since $\overline{a}=(1,1,6)$. 
Thus ${\mathfont{Jac}}(X) \times {\mathfont{Jac}}(E_{1728})$ has two slopes of $0$ and $1$, and
$\nu({\mathfont{Jac}}(X))=(G_{0,1} \oplus G_{1,0})^2 \oplus G_{1,1}^3$.
\end{proof}

\begin{rem}
Another construction of supersingular curves of genus $5$ when $p \equiv 7 \bmod 8$ and $p \gg 0$
can be found in \cite[Theorem~1.2]{LMPT2}. 
\end{rem}

\subsubsection{Case (7)}

In case (7), $z=[12,12,2,2]$.  
Recall that $E_{1728}$ is the elliptic curve with $j$-invariant $1728$.
Let $E_{0}$ be the elliptic curve with $j$-invariant $0$, with affine equation $y^2=x^3-1$.

\begin{lmm} \label{Lekedahl5notcyclicb}
When $z=[12,12,2,2]$, then:
\begin{enumerate}
\item the curve $X$ is a Klein-four cover of $\mathfont{P}^1$, whose degree $2$ quotients 
$Z$, $W$, and $P$ have genera $3$, $2$, and $0$ respectively, and there is an isogeny 
${\mathfont{Jac}}(X) \sim {\mathfont{Jac}}(Z) \times {\mathfont{Jac}}(W)$;
\item the curve $Z$ is a $\ZZ/12 \ZZ$-cover of $\mathfont{P}^1$ branched at three points, 
with inertia type $(1, 5, 6)$ and $Z$ is a $\ZZ/3\ZZ$-cover of the elliptic curve $E_{1728}$;
\item the curve $W$ is a Klein-four cover of $\mathfont{P}^1$, whose three degree $2$ quotients 
are isomorphic to $E_0$, $E_0$, and $\mathfont{P}^1$, and 
there is an isogeny ${\mathfont{Jac}}(W) \sim E_0^2$.
\end{enumerate}
\end{lmm}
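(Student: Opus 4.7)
The plan is to parallel the proof of Lemma~\ref{Lekedahl5notcyclica} by locating Klein-four substructures inside $G$ and applying the Kani--Rosen decomposition (Theorem~\ref{TKR}). By Lemma~\ref{Lnotcyclic}, $G \simeq \ZZ/12\ZZ \times \ZZ/2\ZZ$. Following the conventions preceding Definition~\ref{dfn:inertialtype}, take $a_0 = (1,0)$ and $a_\infty = (0,1)$, which forces $a_1 = (11,1)$; one checks that $|a_1| = 12$, so this is a valid inertia type.

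For part~(1), let $K = \langle (6,0),(0,1)\rangle$ be the Klein-four subgroup of $2$-torsion of $G$. Computing $I_b \cap K$ for each $b \in B$ shows that the induced inertia in $G/K \simeq \ZZ/6\ZZ$ has orders $6,6,1$, so Riemann--Hurwitz gives $g_{X/K}=0$, exhibiting $X$ as a Klein-four cover of $X/K \cong \mathfont{P}^1$. Applying Riemann--Hurwitz to each of the three degree-$2$ quotients by index-$2$ subgroups of $K$ yields genera $2,0,3$. Setting $Z = X/\langle(6,1)\rangle$, $W = X/\langle(6,0)\rangle$, and $P = X/\langle(0,1)\rangle$, Theorem~\ref{TKR} applied to this Klein-four cover gives ${\mathfont{Jac}}(X) \sim {\mathfont{Jac}}(Z) \times {\mathfont{Jac}}(W)$ since ${\mathfont{Jac}}(P)=0$.

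For part~(2), I would verify that $G/\langle(6,1)\rangle$ is cyclic of order $12$ (the natural map $\ZZ/12\ZZ \to G/\langle(6,1)\rangle$ sending $1 \mapsto (1,0)$ is an isomorphism, under which $(0,1)$ is identified with $(6,0)$). Translating the inertia type through this isomorphism gives $(1,5,6)$. The order-$3$ subgroup $3\ZZ/12\ZZ$ then yields a $\ZZ/3\ZZ$-cover $Z \to Z'$, where $Z'$ is a $\ZZ/4\ZZ$-cover of $\mathfont{P}^1$ with inertia type $(1,1,2)$. Riemann--Hurwitz gives $g_{Z'}=1$, and since $E_{1728}$ is the unique elliptic curve (over $\overline{\FF}_p$) admitting a $\ZZ/4\ZZ$-automorphism with quotient $\mathfont{P}^1$ branched at three points, $Z' \cong E_{1728}$.

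For part~(3), let $\tilde K = \langle (3,0),(0,1)\rangle \supset \langle(6,0)\rangle$ be the preimage in $G$ of the $2$-torsion of $G/\langle(6,0)\rangle$, so $\tilde K/\langle(6,0)\rangle \simeq (\ZZ/2\ZZ)^2$. A Riemann--Hurwitz computation for the $\ZZ/3\ZZ$-cover $X/\tilde K \to \mathfont{P}^1$ (unramified above $\infty$, totally ramified at $0$ and $1$) shows $X/\tilde K \cong \mathfont{P}^1$, realizing $W$ as a Klein-four cover of $\mathfont{P}^1$. The three order-$2$ subgroups of $\tilde K/\langle(6,0)\rangle$ correspond to $\langle(3,0)\rangle$, $K$, and $\langle(3,1)\rangle$ in $G$; the middle quotient $X/K$ is $\mathfont{P}^1$ by part~(1), while the other two are $\ZZ/6\ZZ$-covers of $\mathfont{P}^1$ with inertia orders $\{6,3,2\}$, which Riemann--Hurwitz places in genus $1$. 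Since $E_0$ is the unique elliptic curve admitting a $\ZZ/6\ZZ$-automorphism with $\mathfont{P}^1$-quotient, both are isomorphic to $E_0$, and a final application of Theorem~\ref{TKR} gives ${\mathfont{Jac}}(W) \sim E_0^2$. The main obstacle is the careful bookkeeping of inertia through several quotient maps, especially the identification of the intermediate elliptic curves as $E_{1728}$ or $E_0$; this is handled by the rigidity observation that an elliptic curve admitting an order-$4$ (respectively order-$6$) automorphism with $\mathfont{P}^1$-quotient is uniquely determined up to isomorphism over an algebraically closed field of characteristic not dividing the order.
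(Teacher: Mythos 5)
Your proposal is correct and follows essentially the same route as the paper: the same identification $G \simeq \ZZ/12\ZZ \times \ZZ/2\ZZ$ with inertia type $((1,0),(11,1),(0,1))$, the same three Klein-four quotients $Z$, $W$, $P$ combined via Theorem~\ref{TKR}, and the same recognition of $E_{1728}$ and $E_0$ through order-$4$ and order-$6$ automorphisms fixing a totally ramified point; you are in fact slightly more explicit than the paper in checking that $X/K$ and $X/\tilde K$ have genus $0$, which is what lets the $\mathrm{Jac}(X/G)$ terms in the Kani--Rosen isogeny be dropped. The only blemish is notational: the order-$3$ subgroup of $\ZZ/12\ZZ$ is $4\ZZ/12\ZZ=\{0,4,8\}$ rather than $3\ZZ/12\ZZ$, though your subsequent computation (reduction mod $4$, inertia type $(1,1,2)$) uses the correct subgroup.
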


\begin{proof}
By Lemma~\ref{Lnotcyclic}, $G \simeq \ZZ/12 \ZZ \times \ZZ/2 \ZZ$.
After modifying by an element of $\mathfont{Aut}(G)$, we can suppose that $a=((1,0), (11,1), (0,1))$. 
%The curve $X$ has two points above $0$ with inertia group $I_0 = \langle (1,0) \rangle$, 
%two points above $1$ with inertia group $I_1 = \langle (11,1) \rangle$, and 
%twelve points above $\infty$ with inertia group $I_\infty = \langle (0,1) \rangle$.
\begin{enumerate}

\item
The automorphism $(0,1)$ fixes the twelve points of $X$ above $\infty$.
So the quotient $P$ of $X$ by $I_\infty$ is $\mathfont{P}^1$.
The automorphism $(6,0)$ fixes the two points of $X$ above $0$ and the two points of $X$ above $1$.
So the quotient $W$ of $X$ by $\langle (6,0) \rangle$ has genus $2$.
The automorphism $(6,1)$ fixes no points of $X$.
So the quotient $Z$ of $X$ by $\langle (6,1) \rangle$ has genus $3$.
By Theorem~\ref{TKR}, ${\mathfont{Jac}}(X) \sim {\mathfont{Jac}}(Z) \times {\mathfont{Jac}}(W)$. 

\item The curve $Z$ has an action by $G/\langle (6,1) \rangle \simeq \ZZ/12\ZZ$, 
which is generated by $w=(1,1)$.
The inertia type $a$ reduces to $\overline{a} = (7w, 11w, 6w) \approx (5,1,6)$.
The curve $Z$ has an order $3$ automorphism $\sigma$ that fixes two points and $Z/\langle \sigma \rangle$ is 
an elliptic curve with an automorphism of order $4$ that fixes two points.  Thus
$Z$ is a $\ZZ/3\ZZ$-cover of $E_{1728}$.

\item The curve $W$ admits an action of $\ZZ/6\ZZ \times \ZZ/2 \ZZ$.
The automorphisms $(3,0)$ and $(3,1)$ each fix two points of $W$, 
so $W$ admits two double covers of an elliptic curve, which must be $E_0$
because it has an automorphism of order $3$ which fixes three points.
Since $(3,0)$ and $(3,1)$ generate a Klein-four group, 
${\mathfont{Jac}}(W) \sim E_0^2$ by Theorem~\ref{TKR}.
\end{enumerate}
\end{proof}

\begin{prp} \label{Pekedahl5notcyclicb}
Suppose $z=[12,12,2,2]$. 
Then $X$ is supersingular if and only if $p \equiv 11 \bmod 12$.
If $p \equiv 5 \bmod 12$, then $\nu({\mathfont{Jac}}(X))=(G_{0,1} \oplus G_{1,0})^3 \oplus G_{1,1}^2$.
If $p \equiv 7 \bmod 12$, then $\nu({\mathfont{Jac}}(X))=(G_{0,1} \oplus G_{1,0})^2 \oplus G_{1,1}^3$.
\end{prp}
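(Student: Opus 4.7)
The plan is to deduce the Newton polygon of $\mathrm{Jac}(X)$ from the isogeny decomposition in Lemma~\ref{Lekedahl5notcyclicb}, namely $\mathrm{Jac}(X) \sim \mathrm{Jac}(Z) \times E_0^2$, by analyzing each factor separately according to the congruence class of $p$ modulo $12$.

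First I would handle the factor $E_0^2$. The curve $E_0 : y^2 = x^3-1$ has complex multiplication by $\QQ(\zeta_3)$, so $E_0$ is supersingular exactly when $p$ is inert in $\QQ(\zeta_3)$, i.e.\ when $p \equiv 2 \bmod 3$. In terms of residues mod $12$, this gives $E_0$ supersingular for $p \equiv 5, 11$ and $E_0$ ordinary for $p \equiv 1, 7$.

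Next I would apply the Shimura--Taniyama method of Section~\ref{Scyclic2} to $Z$, which is the cyclic cover of degree $m=12$ with inertia type $\overline{a}=(1,5,6)$ from Lemma~\ref{Lekedahl5notcyclicb}. The step here is to compute the signature $f_j = -1 + \sum_{b} \langle -ja_b/12 \rangle$ for those $j$ with $d_j \nmid a_b$ for all $b \in B$; a short calculation identifies the relevant indices as $\{1,3,5,7,9,11\}$ and gives $S_1 = \{1,3,5\}$ and $S_0 = \{7,9,11\}$. The indices $j \in \{2,4,6,8,10\}$ are discarded because $d_j$ divides one of the $a_b$, so they correspond to quotients of $Z$ of smaller genus that contribute no new Jacobian factor. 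Then I enumerate the orbits of $[\times p]$ on $\{1,\ldots,11\}$ in the four cases $p \equiv 1, 5, 7, 11 \bmod 12$ and apply Theorem~\ref{TSTf}: for $p \equiv 11$ every orbit of the relevant indices is self-dual, so $Z$ is supersingular; for $p \equiv 7$ the orbits $\{1,7\},\{5,11\},\{3,9\}$ each contain exactly one element of $S_1$, giving all slopes $1/2$, so $Z$ is again supersingular; for $p \equiv 5$ the orbits $\{1,5\},\{7,11\}$ (and the fixed points $\{3\},\{9\}$) have $\alpha_r/\#r \in \{0,1\}$, so $Z$ is ordinary; for $p \equiv 1$ all orbits are singletons and $Z$ is ordinary.

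Finally I would combine the two factors. Using that Newton polygons are additive under direct product of $p$-divisible groups, the four cases yield: for $p \equiv 1$, the polygon of $\mathrm{Jac}(X)$ is ordinary; for $p \equiv 5$ it is $(G_{0,1}\oplus G_{1,0})^3 \oplus G_{1,1}^2$; for $p \equiv 7$ it is $(G_{0,1}\oplus G_{1,0})^2 \oplus G_{1,1}^3$; and for $p \equiv 11$ it is $G_{1,1}^5$, so $X$ is supersingular precisely when $p \equiv 11 \bmod 12$. The only slightly delicate bookkeeping is in correctly discarding the indices $j$ with $d_j \mid a_b$ for some $b$, which corresponds to the fact that these characters cut out subcovers whose Jacobians have already been split off via the Kani--Rosen decomposition; but this is exactly what Notation~\ref{NdefR} encodes, so applying the formula mechanically gives the stated Newton polygons.
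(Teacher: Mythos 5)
Your proposal is correct and follows essentially the same route as the paper: both rest on the decomposition $\mathrm{Jac}(X) \sim \mathrm{Jac}(Z) \times E_0^2$ from Lemma~\ref{Lekedahl5notcyclicb}, compute $S_1=\{1,3,5\}$ for the degree-$12$ cover $Z$ with inertia type $(1,5,6)$, and apply the Shimura--Taniyama formula orbit by orbit in each residue class mod $12$. The only cosmetic difference is that the paper gets the ``only if'' direction of the first claim immediately from the fact that $X$ dominates both $E_0$ and $E_{1728}$, whereas you obtain it by exhausting the four congruence classes; both are valid.
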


\begin{proof}
Note that $E_0$ is supersingular exactly when $p \equiv 2 \bmod 3$
and $E_{1728}$ is supersingular exactly when $p \equiv 3 \bmod 4$. 
By Lemma~\ref{Lekedahl5notcyclicb}, $X$ dominates $E_0$ and $E_{1728}$.
Thus $X$ supersingular implies that $p \equiv 11 \bmod 12$.

Suppose $p \equiv 11 \bmod 12$.  Then $E_0$ is supersingular.
By Lemma~\ref{Lekedahl5notcyclicb}, ${\mathfont{Jac}}(X) \sim {\mathfont{Jac}}(Z) \times E_0^2$.
The curve $Z$ is supersingular by Theorem~\ref{TSTf}, since the orbits of $\times p$ on $\ZZ/12 \ZZ$ are 
self-dual.  This completes the proof of the first claim.

The $\ZZ/12 \ZZ$-cover $Z \to \mathfont{P}^1$ has inertia type $(5,1,6)$.  We
compute that the eigenspaces of $H^0(Z, \Omega^1_Z)$ of positive dimension are indexed by 
$S_1 = \{1,3,5\}$.  
When $p \equiv 5 \bmod 12$, then $E_0$ is supersingular.
Also, the orbits of $\times p$ are $\{1,5\}$, $\{7,11\}$, 
$\{3\}$ and $\{9\}$.  By Theorem~\ref{TSTf}, $\nu(\mathfont{Jac}(Z)) = (G_{0,1} \oplus G_{1,0})^3$.
When $p \equiv 7 \bmod 12$, then $E_0$ is ordinary.  Also, the orbits of $\times p$ are $\{1,7\}$, $\{5,11\}$, 
and $\{3, 9\}$.  By Theorem~\ref{TSTf}, $Z$ is supersingular.  \end{proof}

\section{Cyclic Examples Genus 6 through 10} \label{Sdatacyclic}

Our program computed this data for curves $X$ over $k$, of genus $6 \leq g \leq 10$, 
that occur as a degree $m$ cyclic cover $\phi: X \to \mathfont{P}^1$ branched at three points: the ramification types $z$; 
the inertia types $a$, up to equivalence; and the odd primes $p$ with $\mathrm{gcd}(p,m)=1$ for which the reduction of 
$X$ modulo $p$ is supersingular.  The results are in Tables~\ref{genus6}-\ref{genus10}.

%\begin{table}[htbp]
%\begin{tabular}{|l|l|l|} \hline
%$z$ & $a$ & supersingular primes \\ \hline
%$\left[11, 11, 11, 1\right]$ & $\left(1, 9, 1\right)$ & $p \equiv QNR \bmod{11}$ \\ \hline
% & $\left(1, 8, 2\right)$ & $p \equiv QNR \bmod{11}$ \\ \hline
%$\left[12, 12, 6, 1\right]$ & $\left(1, 1, 10\right)$ & $p \equiv 11 \bmod{12}$ \\ \hline
%$\left[15, 15, 3, 1\right]$ & $\left(1, 4, 10\right)$ & $p \equiv 11, 14 \bmod{15}$ \\ \hline
%$\left[20, 20, 2, 1\right]$ & $\left(1, 9, 10\right)$ & $p \equiv 11, 19 \bmod{20}$ \\ \hline
%$\left[22, 11, 2, 1\right]$ & $\left(1, 10, 11\right)$ & $p \equiv QNR \bmod{22}$ \\ \hline
%\end{tabular}
%\caption{Genus $5$}
%\end{table}

\begin{table}[htbp]
\centering
\begin{tabular}{|l|l|l|} \hline
$z$ & $a$ & supersingular primes \\ \hline
$\left[13, 13, 13, 1\right]$ & $\left(1, 11, 1\right), \left(1, 10, 2\right), \left(1, 9, 3\right) $ & $p \not \equiv 1,3,9 \bmod{13}$ \\ \hline
 
$\left[14, 14, 7, 1\right]$ & $\left(1, 11, 2\right), \left(1, 5, 8\right), \left(1, 1, 12\right)$ & $p \equiv QNR \bmod{14}$ \\ \hline
$\left[15, 15, 5, 1\right]$ & $\left(1, 11, 3\right)$ & $p \equiv 2,3,4 \bmod 5$ \\ 
%$p \equiv 2, 4, 7, 8, 13, 14 \bmod{15}$ \\ \hline
 & $\left(1, 8, 6\right)$ & $p \equiv 7, 13, 14 \bmod{15}$ \\ \hline
$\left[16, 16, 4, 1\right]$ & $\left(1, 11, 4\right)$ & $p \equiv 15 \bmod{16}$ \\ \hline
$\left[20, 5, 4, 1\right]$ & $\left(1, 4, 15\right)$ & $p \equiv 19 \bmod{20}$ \\ \hline
$\left[18, 18, 3, 1\right]$ & $\left(1, 11, 6\right)$ & $p \equiv 2 \bmod{3}$ \\ \hline
%$p \equiv QNR \bmod{18}$
$\left[21, 7, 3, 1\right]$ & $\left(1, 6, 14\right)$ & $p \equiv 5, 17, 20 \bmod{21}$ \\ \hline
$\left[24, 24, 2, 1\right]$ & $\left(1, 11, 12\right)$ & $p \equiv 13, 23 \bmod{24}$ \\ \hline
$\left[26, 13, 2, 1\right]$ & $\left(1, 12, 13\right)$ & $p \not \equiv 1,3,9 \bmod{26}$ \\ \hline
%$p \equiv 5, 7, 11, 15, 17, 19, 21, 23, 25 \bmod{26}$ \\ \hline
\end{tabular}
\caption{Cyclic genus $6$} \label{genus6}
\end{table}

\begin{table}[htbp]
\centering
\begin{tabular}{|l|l|l|} \hline
$z$ & $a$ & supersingular primes \\ \hline
$\left[15, 15, 15, 1\right]$ & $\left(1, 13, 1\right)$ & $p \equiv 14 \bmod{15}$ \\ \hline
$\left[16, 16, 8, 1\right]$ & $\left(1, 13, 2\right), \left(1, 1, 14\right)$ & $p \equiv 15 \bmod{16}$ \\ 
 & $\left(1, 9, 6\right)$ & $p \equiv 7, 15 \bmod{16}$ \\ \hline
$\left[18, 9, 6, 1\right]$ & $\left(1, 14, 3\right), \left(1, 2, 15\right)$ & $p \equiv 2 \bmod{3}$ \\ \hline
%$p \equiv QNR \bmod{18}$
$\left[20, 10, 4, 1\right]$ & $\left(1, 14, 5\right)$ & $p \equiv 19 \bmod{20}$ \\ \hline
$\left[21, 21, 3, 1\right]$ & $\left(1, 13, 7\right)$ & $p \equiv 2 \bmod{3}$ \\ \hline
%$p \equiv 2, 5, 8, 11, 17, 20 \bmod{21}$
$\left[24, 8, 3, 1\right]$ & $\left(1, 15, 8\right)$ & $p \equiv 23 \bmod{24}$ \\ \hline
$\left[28, 28, 2, 1\right]$ & $\left(1, 13, 14\right)$ & $p \equiv 3 \bmod{4}$ \\ \hline
%$p \equiv 3, 11, 15, 19, 23, 27
$\left[30, 15, 2, 1\right]$ & $\left(1, 14, 15\right)$ & $p \equiv 29 \bmod{30}$ \\ \hline
\end{tabular}
\caption{Cyclic genus $7$}
\end{table}

\begin{table}[htbp]
\centering
\begin{tabular}{|l|l|l|} \hline
$z$ & $a$ & supersingular primes \\ \hline
$\left[17, 17, 17, 1\right]$ & $\left(1, 15, 1\right), \left(1, 14, 2\right), \left(1, 13, 3\right)$ & $p \not \equiv 1 \bmod{17}$ \\ \hline
$\left[18, 18, 9, 1\right]$ & $\left(1, 13, 4\right),\left(1, 1, 16\right)$ & $p \equiv 2 \bmod{3}$ \\ \hline
%$p \equiv QNR \bmod{18}$
$\left[20, 20, 5, 1\right]$ & $\left(1, 11, 8\right)$ & $p \equiv 2,3,4 \bmod{5}$ \\ 
%$p \equiv 3, 7, 9, 13, 17, 19 \bmod{20}$
 & $\left(1, 7, 12\right)$ & $p \equiv 13, 17, 19 \bmod{20}$ \\ \hline
$\left[24, 24, 3, 1\right]$ & $\left(1, 7, 16\right)$ & $p \equiv 17, 23 \bmod{24}$ \\ \hline
$\left[32, 32, 2, 1\right]$ & $\left(1, 15, 16\right)$ & $p \not \equiv 1,15 \bmod{32}$ \\ \hline
$\left[34, 17, 2, 1\right]$ & $\left(1, 16, 17\right)$ & $p \not \equiv 1 \bmod{34}$ \\ \hline
\end{tabular}
\caption{Cyclic genus $8$}
\end{table}

\begin{table}[htbp]
\centering
\begin{tabular}{|l|l|l|} \hline
$z$ & $a$ & supersingular primes \\ \hline
$\left[19, 19, 19, 1\right]$ & $\left(1, 17, 1\right),\left(1, 16, 2\right)$, & $p \equiv QNR \bmod{19}$ \\
& $\left(1, 15, 3\right), \left(1, 11, 7\right)$ & $p \equiv QNR \bmod{19}$ \\ \hline
$\left[20, 20, 10, 1\right]$ & $\left(1, 17, 2\right)$ & $p \equiv 3, 7, 19 \bmod{20}$ \\ \hline
 & $\left(1, 1, 18\right)$ & $p \equiv 19 \bmod{20}$ \\ \hline
$\left[21, 21, 7, 1\right]$ & $\left(1, 17, 3\right), \left(1, 11, 9\right)$ & $p \equiv 5, 10, 17, 19, 20 \bmod{21}$ \\ 
  & $\left(1, 8, 12\right)$ & $p \equiv QNR \bmod{7}$ \\ \hline
%$p \equiv 5, 10, 13, 17, 19, 20 \bmod{21}$
$\left[24, 8, 6, 1\right]$ & $\left(1, 3, 20\right)$ & $p \equiv 23 \bmod{24}$ \\ \hline
$\left[24, 24, 4, 1\right]$ & $\left(1, 17, 6\right)$ & $p \equiv 7, 23 \bmod{24}$ \\ \hline
 & $\left(1, 5, 18\right)$ & $p \equiv 19, 23 \bmod{24}$ \\ \hline
$\left[28, 7, 4, 1\right]$ & $\left(1, 20, 7\right)$ & $p \equiv 3, 19, 27 \bmod{28}$ \\ \hline
$\left[27, 27, 3, 1\right]$ & $\left(1, 17, 9\right)$ & $p \equiv 2 \bmod{3}$ \\ \hline
%$p \equiv QNR \bmod{27}$
$\left[30, 10, 3, 1\right]$ & $\left(1, 9, 20\right)$ & $p \equiv 29 \bmod{30}$ \\ \hline
$\left[36, 36, 2, 1\right]$ & $\left(1, 17, 18\right)$ & $p \equiv 3 \bmod{4}$ \\ \hline
%$p \equiv 7, 11, 19, 23, 31, 35 \bmod{36}$
$\left[38, 19, 2, 1\right]$ & $\left(1, 18, 19\right)$ & $p \equiv QNR \bmod{38}$ \\ \hline
\end{tabular}
\caption{Cyclic genus $9$}
\end{table}

\begin{table}[htbp]
\centering
\begin{tabular}{|l|l|l|} \hline
$z$ & $a$ & supersingular primes \\ \hline
$\left[21, 21, 21, 1\right]$ & $\left(1, 19, 1\right), \left(1, 16, 4\right)$ & $p \equiv 5, 17, 20 \bmod{21}$ \\ \hline
$\left[22, 22, 11, 1\right]$ & $\left(1, 19, 2\right), \left(1, 17, 4\right),\left(1, 15, 6\right) $ & $p \equiv QNR \bmod{22}$ \\ 
 & $\left(1, 9, 12\right),\left(1, 1, 20\right)$ & $p \equiv QNR \bmod{22}$ \\ \hline
$\left[24, 12, 8, 1\right]$ & $\left(1, 14, 9\right), \left(1, 2, 21\right)$ & $p \equiv 23 \bmod{24}$ \\ \hline
$\left[24, 24, 6, 1\right]$ & $\left(1, 19, 4\right)$ & $p \equiv 5, 23 \bmod{24}$ \\ \hline
$\left[25, 25, 5, 1\right]$ & $\left(1, 19, 5\right),\left(1, 14, 10\right)$ & $p \neq 1 \bmod{5}$ \\ \hline
$\left[30, 6, 5, 1\right]$ & $\left(1, 5, 24\right)$ & $p \equiv 29 \bmod{30}$ \\ \hline
$\left[28, 14, 4, 1\right]$ & $\left(1, 6, 21\right)$ & $p \equiv 3, 19, 27 \bmod{28}$ \\ \hline
$\left[30, 30, 3, 1\right]$ & $\left(1, 19, 10\right)$ & $p \equiv 11, 29 \bmod{30}$ \\ \hline
$\left[33, 11, 3, 1\right]$ & $\left(1, 21, 11\right)$ & $p \equiv 2, 8, 17, 29, 32 \bmod{33}$ \\ \hline
$\left[40, 40, 2, 1\right]$ & $\left(1, 19, 20\right)$ & $p \equiv 21, 39 \bmod{40}$ \\ \hline
$\left[42, 21, 2, 1\right]$ & $\left(1, 20, 21\right)$ & $p \equiv 5, 17, 41 \bmod{42}$ \\ \hline
\end{tabular}
\caption{Cyclic genus $10$} \label{genus10}
\end{table}

\section{Non-Cyclic Examples} \label{Sdatanoncyclic}

Our program identified $14$ ramification types $z=[c_0, c_1,c_\infty, s]$, with $s >1$, for which there is a valid inertia type, 
for an abelian cover $\phi: X \to \mathfont{P}^1$ branched at three points such that the genus $g$ of $X$ satisfies $6 \leq g \leq 10$.
In this section, we analyze the supersingular primes $p$ for these covers, assuming throughout that $p \nmid c_0s$.
We also prove a general result about a class of examples which occur for every genus. 

\begin{rem} \label{Rnovalid}
For $5 \leq g \leq 10$, we encountered three cases with no inertia type realizing the ramification type:
($g=6$) $z = [8,8,8,2]$; ($g=8$) $z =[12, 12,4,2]$; and
($g=10$) $z=[12,12,12,2]$.
\end{rem}

Here is some notation. If $g \in G$ has prime order $\ell$,
let $X_g = X/\langle g \rangle$ be the quotient curve and 
$\phi_g: X \to X_g$ be the quotient cover.  
For $b \in \{0,1,\infty\}$, the inertia group $I_b$ is independent of the ramification point $\eta \in X$ above $b$ 
since $G$ is abelian.
If $g \in I_b$, then $\phi_g$ is ramified with index $\ell$ at $\eta$.
If $g \not \in I_b$, then $\phi_g$ is not ramified at $\eta$.

\subsection{A useful lemma}

\begin{lmm} \label{Lreduce2}
Let $r \geq 2$.
If $z = [2r,2r,2,2]$, then $\mathfont{Jac}(X) \sim \mathfont{Jac}(X_1) \times \mathfont{Jac}(X_2)$, 
where $\phi_i: X_i \to \mathfont{P}^1$ is an abelian cover branched at $B=\{0,1,\infty\}$ for $i=1,2$.
If $r$ is odd and $i=1,2$, or if $r$ is even and $i=1$, 
then $\phi_i$ has group $\ZZ/2r \ZZ$ with $a=(1, r, r-1)$.
If $r$ is even, then 
$\phi_2$ has group $\ZZ/r \ZZ \times \ZZ/2 \ZZ$ with $a=((1,0), (-1,-1), (0,1))$.
\end{lmm}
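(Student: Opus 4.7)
The plan is to apply the Kani--Rosen theorem (Theorem~\ref{TKR}) to a Klein-four subgroup $K \subset G$ chosen so that two of the three intermediate quotients are $\mathfont{P}^1$; this will produce the claimed decomposition $\mathfont{Jac}(X) \sim \mathfont{Jac}(X_1) \times \mathfont{Jac}(X_2)$ with no extra Prym computation.

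By Lemma~\ref{Lnotcyclic}, the hypotheses on $z$ give $G \simeq \ZZ/2r\ZZ \times \ZZ/2\ZZ$. Following the conventions established after Lemma~\ref{Lnotcyclic}, I take $a_0 = (1,0)$ and, since $c_\infty = s = 2$, also $a_\infty = (0,1)$; the relation $a_0 a_1 a_\infty = \mathrm{Id}_G$ then forces $a_1 = (-1,-1)$. Consider $K = \langle (r,0), (0,1) \rangle \simeq (\ZZ/2\ZZ)^2$ with its three order-$2$ subgroups $H_1 = \langle (r,0) \rangle$, $H_2 = \langle (0,1) \rangle = I_\infty$, and $H_3 = \langle (r,1) \rangle$.

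A short check of the intersections $I_b \cap K$ and $I_b \cap H_2$ for each $b \in B$ shows that $X/K \to \mathfont{P}^1$ is a cyclic degree $r$ cover totally ramified over $\{0,1\}$ and unramified over $\infty$, and that $X/H_2 \to \mathfont{P}^1$ is a cyclic degree $2r$ cover totally ramified over $\{0,1\}$ and unramified over $\infty$; hence both quotients have genus zero. Applying Theorem~\ref{TKR} to $K$ (in the role of the ambient group) with the three subgroups $H_1, H_2, H_3$ yields $\mathfont{Jac}(X)^2 \times \mathfont{Jac}(X/K)^4 \sim \mathfont{Jac}(X/H_1)^2 \times \mathfont{Jac}(X/H_2)^2 \times \mathfont{Jac}(X/H_3)^2$. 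Since $\mathfont{Jac}(X/K) = 0 = \mathfont{Jac}(X/H_2)$ and the category of abelian varieties up to isogeny is semi-simple, I obtain $\mathfont{Jac}(X) \sim \mathfont{Jac}(X/H_1) \times \mathfont{Jac}(X/H_3)$ and set $X_1 := X/H_3$ and $X_2 := X/H_1$.

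To finish, I identify the Galois group and inertia type of each $\phi_i : X_i \to \mathfont{P}^1$, using that the canonical generator of the inertia above $b$ is the image of $a_b$ in $G/H_i$. For $X_1 = X/H_3$, the element $(1,0)$ has order $2r$ in $G/H_3$ for every $r$ (since $(r,1) \notin \langle (1,0) \rangle$), so $G/H_3 \simeq \ZZ/2r\ZZ$; identifying this generator with $1$, the images of $(a_0, a_1, a_\infty)$ work out to $(1, r-1, r)$, equivalent to $(1, r, r-1)$ up to relabeling. For $X_2 = X/H_1$, the element $(1,0)$ has order $r$ in $G/H_1$, so $G/H_1 \simeq \ZZ/r\ZZ \times \ZZ/2\ZZ$: when $r$ is even, this group is non-cyclic and the images of $(a_0, a_1, a_\infty)$ are $((1,0), (-1,-1), (0,1))$ directly, while for $r$ odd the group is cyclic of order $2r$ (generated by the image of $(1,1)$) and a similar push-forward produces an inertia type equivalent to $(1, r, r-1)$. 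The main obstacle is this last bookkeeping step: one must carefully track how each $a_b$ projects to $G/H_i$ and then match the resulting triple to the claimed inertia type under the action of $\mathrm{Aut}(G/H_i)$ and the allowed permutations of branch points, with the parity of $r$ dictating whether the quotient $G/H_1$ is cyclic.
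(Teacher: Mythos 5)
Your proof is correct and follows essentially the same route as the paper: identify $G \simeq \ZZ/2r\ZZ \times \ZZ/2\ZZ$ via Lemma~\ref{Lnotcyclic}, normalize the inertia type to $((1,0),(-1,-1),(0,1))$, apply Kani--Rosen to the Klein-four subgroup $\langle (r,0),(0,1)\rangle$ after observing that $X/K$ and $X/I_\infty$ have genus zero, and then push the inertia type down to the two remaining quotients, splitting on the parity of $r$. The only cosmetic difference is that you see the genus-zero quotients directly as cyclic covers of $\mathfont{P}^1$ branched at two points, whereas the paper first computes $g(X)=r-1$ and applies Riemann--Hurwitz to the double cover $X \to X/I_\infty$; your final triple $(1,r-1,r)$ agrees with the paper's computation and, as there, matches $(1,r,r-1)$ after relabeling the branch points $1$ and $\infty$.
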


\begin{proof}
By the Riemann--Hurwitz formula, $X$ has genus $r-1$. 
By Lemma~\ref{Lnotcyclic}, $G = \ZZ/2r \ZZ \times \ZZ/2\ZZ$. 
Without loss of generality, $a_0=(1,0)$, $a_1=(-1,1)$, and $a_\infty = (0,1)$.
Since $a_\infty \in I_\infty$, the double cover
$\phi_{a_\infty} : X \to X_{a_\infty}$ is branched at the $2r$ points above $\infty$.
Thus $X_{a_\infty}$ has genus $0$.  

Let $K=\langle (r,0), (0,1) \rangle$.  Since $X/K$ is a quotient of $X_{a_\infty}$, it also has genus $0$.
Write $X_1 = X/\langle (r,1) \rangle$ and $X_2 = X/\langle (r,0) \rangle$.
By Theorem~\ref{TKR}, 
$\mathfont{Jac}(X) \sim \mathfont{Jac}(X_1) \times \mathfont{Jac}(X_2)$.

For $g=(r,0)$ and $g=(r,1)$, note that $g \not \in I_\infty$; so $\phi_g$ is not ramified in the fiber over $\infty$.
There are $2$ points of $X$ above each of $0$ and $1$.
Now $(r,0) \in I_0$ and $(r,1) \not \in I_0$.
Also $(r,0) \in I_1$ if and only if $r$ is even, while
$(r,1) \in I_1$ if and only if $r$ is odd.

Suppose $r$ is odd.  Then $X_1$ and $X_2$ both have genus $(r-1)/2$.
The quotient group $\overline{G}_2 = G/\langle (r,0) \rangle$ is cyclic of order $2r$ with generator $\tau_0=(1,1)$.
Then $a_1 = - \tau_0$, $a_\infty \equiv r \tau_0 \bmod (r,0)$, and
$a_0 = (r+1) \tau_0$.  Thus the reduction of the inertia type in $\overline{G}_2$ is $(r+1, r, -1) \approx (1, r, r-1)$.
The quotient group $\overline{G}_1 = G/\langle (r,1) \rangle$ is cyclic of order $2r$ with generator $\tau_1=(1,0) = a_0$.
We see that $a_\infty \equiv r \tau_1$ and $a_1 \equiv (r-1) \tau_1$.
Thus the $\ZZ/2r\ZZ$-cover $\phi_i: X_i \to \mathfont{P}^1$
has inertia type $(1, r, r-1)$ for $i=1,2$.

Suppose $r$ is even.
Then $\phi_{(r,1)}$ is unramified and $X_1$ has genus $r/2$;
also $\phi_{(r,0)}$ is ramified at four points and $X_2$ has genus $(r/2)-1$.
Also $\overline{G}_1 = G/\langle (r,1) \rangle$ is cyclic of order $2r$ with generator $\tau_1 = (1,1) = -a_1$.
By a similar argument, $\phi_1$ has inertia type $(1,r,r-1)$.
The inertia type for $\phi_2$ is found by working in $G/\langle (r,0) \rangle$.
\end{proof}

\begin{rem} \label{Rpositivedensity}
By Lemma~\ref{Lreduce2}, for $r \geq 2$, 
the ramification type $z = [2r,2r,2,2]$ yields a curve $X$ of genus $r-1$ which is supersingular for all primes $p \equiv -1 \bmod 2r$. 
This shows that the natural density of $\SSS_g$ is positive.
We omit the details because this can also be proven using \cite{Yui} or (unpublished) work in \cite{surim}.
\end{rem}

\subsection{Non-Cyclic Examples: Genus 6 through 10}

The tables below contain the following information for the $14$ non-cyclic cases for $6 \leq g \leq 10$:
the ramification type $z$; the decomposition of the Jacobian; the congruence conditions on $p$, assuming $\mathrm{gcd}(p, |G|)=1$, 
for which the curve $X$ is supersingular.

The decomposition of the Jacobian is given by the data $(m_i, \alpha_i)$ indexed by $i \in I$, 
meaning that there is an isogeny $\mathfont{Jac}(X) \sim \prod_{i \in I} \mathfont{Jac}(Z_i)$, 
where there is a cyclic degree $m_i$ 
cover $\phi_i: Z_i \to \mathfont{P}^1$ branched at $B =\{0,1,\infty\}$ with inertia type $\alpha_i$.  
We use an addition sign to denote a product of 
Jacobians, an exponent to denote a product with multiplicity, and a subtraction sign to denote a quotient.

\begin{lmm}
Here are the supersingular primes for genus six curves realized as a non-cyclic abelian cover of $\mathfont{P}^1$ 
branched at $B=\{0,1,\infty\}$:
\[
\begin{tabular} {|l|l|l|l|}
\hline
$z$ & %$a$ & 
decomposition & supersingular primes \\
\hline
$[5,5,5,5]$ & %$((1,0), (0,1), (4,4))$ & 
$(5, (1,1,3))^3$ & $p \equiv 2,3,4 \bmod{5}$ \\
\hline
$[14,14,2,2]$ & %$((1,0), (-1,-1), (0,1))$ & 
$(14, (1,6,7))^2$ & $p \equiv 3, 5, 13 \bmod{14}$ \\
\hline
\end{tabular}
\]
\end{lmm}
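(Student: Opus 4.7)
The plan is to handle the two ramification types separately, in each case decomposing $\mathfont{Jac}(X)$ up to isogeny into Jacobians of cyclic covers of $\mathfont{P}^1$ branched at three points, and then applying the Shimura--Taniyama formula of Theorem~\ref{TSTf} to each factor.

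For $z=[5,5,5,5]$, Lemma~\ref{Lnotcyclic} forces $G\simeq(\ZZ/5\ZZ)^2$, and a short check shows we may take $a_0=(1,0)$, $a_\infty=(0,1)$, $a_1=(4,4)$. The group $G$ has six subgroups of order five, and their union is all of $G$. Applying the Kani--Rosen theorem (Theorem~\ref{TKR}) to these six subgroups together with $X/G=\mathfont{P}^1$, one obtains, after isogeny cancellation,
\[
\mathfont{Jac}(X)\sim\prod_{i=1}^{6}\mathfont{Jac}(X/H_i).
\]
Exactly three of the subgroups, namely $\langle(1,0)\rangle$, $\langle(0,1)\rangle$, and $\langle(1,1)\rangle=\langle a_1\rangle$, each contain one of $a_0,a_1,a_\infty$; the corresponding $\ZZ/5\ZZ$-quotient covers $\mathfont{P}^1$ branched at only two points, so has genus zero. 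The remaining three subgroups $\langle(1,2)\rangle$, $\langle(1,3)\rangle$, $\langle(1,4)\rangle$ yield $\ZZ/5\ZZ$-covers branched at all three points of $B$; reducing the inertia type $((1,0),(4,4),(0,1))$ modulo each subgroup produces the tuples $(3,1,1)$, $(2,1,2)$, $(1,1,3)$, each equivalent to $(1,1,3)$ after scaling by a unit and permuting branch points (all three branch points have inertia of order $5$). Hence $\mathfont{Jac}(X)\sim\mathfont{Jac}(Z)^3$ for the genus-two cover $Z$ with inertia $(1,1,3)$. A direct computation gives signature $(1,1,0,0)$ and thus $S_1=\{1,2\}$; the orbits of $\times p$ on $\{1,2,3,4\}$ each have $\alpha_r/\#r=1/2$ precisely when $p\equiv2,3,4\pmod{5}$, giving the stated supersingular primes by Theorem~\ref{TSTf}.

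For $z=[14,14,2,2]$, apply Lemma~\ref{Lreduce2} with $r=7$ (odd) to obtain $\mathfont{Jac}(X)\sim\mathfont{Jac}(X_1)\times\mathfont{Jac}(X_2)$, where both $X_i$ are $\ZZ/14\ZZ$-covers with inertia $(1,7,6)$; after relabeling branch points to satisfy $c_0\geq c_1\geq c_\infty$, this becomes the type $(1,6,7)$ appearing in the table, so $X$ is supersingular iff $X_1$ is. The indices $j\in\{1,\dots,13\}$ for which $d_j$ does not divide any $a_b$ are exactly those with $\gcd(j,14)=1$, namely $\{1,3,5,9,11,13\}$. Computing $f_j=-1+\langle -j/14\rangle+\langle -6j/14\rangle+\langle -7j/14\rangle$ on this set yields $S_1=\{1,3,5\}$ and $S_0=\{9,11,13\}$. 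For $p\equiv3,5\pmod{14}$, the set $\{1,3,5,9,11,13\}$ forms a single self-dual orbit under $\times p$ of size $6$ meeting $S_1$ in three elements, so $\alpha_r/\#r=1/2$; for $p\equiv13\pmod{14}$, each orbit is self-dual of size $2$ with one element in $S_1$, again giving slope $1/2$. For $p\equiv9,11\pmod{14}$, the two orbits $\{1,9,11\}$ and $\{3,5,13\}$ meet $S_1$ in $1$ and $2$ elements, yielding slopes $1/3$ and $2/3$; for $p\equiv1\pmod{14}$ the cover is ordinary. Theorem~\ref{TSTf} then gives the claimed set $\{p\equiv3,5,13\pmod{14}\}$.

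The main obstacle is the bookkeeping in the first case: one must verify that three of the six order-five subgroups yield genus-zero quotients while the other three yield pairwise isomorphic genus-two $\ZZ/5\ZZ$-covers, so that $\mathfont{Jac}(X)$ decomposes as the cube of a single Jacobian. The second case is more routine, as Lemma~\ref{Lreduce2} reduces the problem immediately to the cyclic Shimura--Taniyama computation.
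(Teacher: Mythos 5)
Your proof is correct and follows essentially the same route as the paper: Kani--Rosen applied to the six order-five subgroups of $(\ZZ/5\ZZ)^2$ (three genus-zero quotients, three genus-two quotients with inertia type equivalent to $(1,1,3)$) for $z=[5,5,5,5]$, and Lemma~\ref{Lreduce2} followed by the Shimura--Taniyama computation for $z=[14,14,2,2]$. The only difference is that you carry out the orbit/slope computations explicitly where the paper defers them to its program, and there is a harmless bookkeeping slip in one reduced inertia type ($(2,1,2)$ versus $(2,2,1)$), both of which are equivalent to $(1,1,3)$.
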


\begin{proof}
When $g=[5,5,5,5]$, 
then $a_0=(1,0)$, $a_1 = (-1,-1)$, and $a_\infty=(0,1)$.
By Theorem~\ref{TKR}, $\mathfont{Jac}(X)$ is isogenous to the product of the Jacobians 
of six quotients of $X$.
For $g = (1,0)$, $(0,1)$, and $(1,1)$, the cover $\phi_g$ is ramified at the five points above one point $b \in B$; 
thus $X_g$ has genus $0$.
For $g=(2,1)$, $(3,1)$, and $(4,1)$, the cover $\phi_g$ is not ramified and $X_g$ has genus two;
the cover $\psi_g: X_g \to \mathfont{P}^1$ is a $\ZZ/5 \ZZ$-cover 
branched at $B$; up to equivalence, there is only one inertia type for $\psi_g$, namely $(1,1,3)$.

The decomposition for $z=[14,14,2,2]$ follows from Lemma~\ref{Lreduce2}.
\end{proof}

\begin{lmm}
Here are the supersingular primes for genus seven curves realized as a non-cyclic abelian cover of $\mathfont{P}^1$ 
branched at $B=\{0,1,\infty\}$:
\[
\begin{tabular} {|l|l|l|l|}
\hline
$z$ & %$a$ & 
decomposition & supersingular primes \\
\hline
$[9,9,3,3]$ & 
%$((1,0), (-1,-1), (0,1))$& 
$(3, (1,1,1)) + (9, (1,3,5)) + (9, (1,2,6))$ &  $p \equiv 2 \bmod{3}$ \\ \hline 
$[12,6,4,2]$ & %$((1,0), (2,1), (9,1))$  &
$(12, (1,2,9)) + (12, (1,3,8))$ & $p \equiv 11 \bmod{12}$ \\ \hline
$[16,16,2,2]$ & %$((1,0), (-1,-1), (0,1))$ & 
$(16, (1,7,8)) + (8, (1,3,4)) + (4, (1,1,2))$ & $p \equiv 15 \bmod{16}$ \\ \hline
\end{tabular}
\]
\end{lmm}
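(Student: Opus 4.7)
The plan is to treat each of the three cases separately by: (i) identifying $G$ via Lemma~\ref{Lnotcyclic}; (ii) fixing an inertia type $a$ up to equivalence; (iii) decomposing $\mathfont{Jac}(X)$ up to isogeny into Jacobians of cyclic covers of $\mathfont{P}^1$ branched at $B$, using Theorem~\ref{TKR} and/or iterations of Lemma~\ref{Lreduce2}; and (iv) applying Theorem~\ref{TSTf} to each cyclic factor and intersecting the resulting congruence conditions on $p$.

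For $z=[9,9,3,3]$, $G\cong\ZZ/9\ZZ\times\ZZ/3\ZZ$ and I would take $a=((1,0),(8,2),(0,1))$. I would apply Theorem~\ref{TKR} with the subgroup $K=3G\cong(\ZZ/3\ZZ)^2\subset G$, covered by its four order-$3$ subgroups $K_1=\langle(3,0)\rangle$, $K_2=\langle(0,1)\rangle$, $K_3=\langle(3,1)\rangle$, $K_4=\langle(3,2)\rangle$. Since $X/K$ has genus $0$ (image inertia $(1,2,0)$ in $\ZZ/3\ZZ$), this reduces to $\mathfont{Jac}(X)\sim\sum_{i=1}^4\mathfont{Jac}(X/K_i)$. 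The quotient $X/K_2$ has genus $0$; $X/K_1$ is a $(\ZZ/3\ZZ)^2$-Galois cover of $\mathfont{P}^1$ branched at three points (the Fermat cubic, an elliptic curve with $j=0$); and $X/K_3,X/K_4$ are the cyclic $\ZZ/9$-covers of genus $3$ with inertia $(1,2,6)$ and $(1,3,5)$. For each of the latter, one computes $S_1=\{1,2,4\}$ and $S_0=\{5,7,8\}=-S_1$ in $\ZZ/9\ZZ$, so by Theorem~\ref{TSTf} all orbits of $\times p$ are balanced between $S_1$ and $S_0$ exactly when $-1\in\langle p\rangle\subset(\ZZ/9\ZZ)^*$, equivalent to $p\equiv 2\bmod 3$; combined with the same condition for the $j=0$ elliptic curve, the supersingular primes are $p\equiv 2\bmod 3$.

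For $z=[12,6,4,2]$, $G\cong\ZZ/12\ZZ\times\ZZ/2\ZZ$ with $a=((1,0),(8,1),(3,1))$, I would apply Theorem~\ref{TKR} to the Klein-four subgroup $K=\langle(6,0),(0,1)\rangle$ to get $\mathfont{Jac}(X)+2\,\mathfont{Jac}(X/K)\sim\sum_{i=1}^3\mathfont{Jac}(X/K_i)$. The quotient $X/K$ is an elliptic curve with $j=0$, and $X/\langle(6,0)\rangle$ is a $(\ZZ/6\ZZ\times\ZZ/2\ZZ)$-cover of type $[6,6,2,2]$ which decomposes by Lemma~\ref{Lreduce2} (with $r=3$ odd) into two cyclic $\ZZ/6$-covers of inertia $(1,3,2)$, both $j=0$ elliptic curves via their induced $\ZZ/3$-automorphism; hence $\mathfont{Jac}(X/\langle(6,0)\rangle)\sim 2\,\mathfont{Jac}(X/K)$. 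This cancellation yields $\mathfont{Jac}(X)\sim\mathfont{Jac}(X/\langle(0,1)\rangle)+\mathfont{Jac}(X/\langle(6,1)\rangle)$, the two cyclic $\ZZ/12$-covers with inertia $(1,3,8)$ and $(1,2,9)$. For each, the signature satisfies $S_0=-S_1$ in $\ZZ/12\ZZ$, and a short check of each residue class modulo $12$ confirms both covers are supersingular precisely when $-1\in\langle p\rangle\subset(\ZZ/12\ZZ)^*$, i.e., $p\equiv 11\bmod 12$.

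For $z=[16,16,2,2]$ I would iterate Lemma~\ref{Lreduce2} three times with $r=8,4,2$, splitting off cyclic $\ZZ/16$-, $\ZZ/8$-, and $\ZZ/4$-covers with inertia $(1,7,8)$, $(1,3,4)$, and $(1,1,2)$ respectively (the residual $(\ZZ/2\ZZ)^2$-cover has genus $0$). Shimura--Taniyama analysis of each cyclic factor gives the supersingular conditions $p\equiv 3\bmod 4$ for the $\ZZ/4$-factor, $p\equiv 5,7\bmod 8$ for the $\ZZ/8$-factor, and an explicit set of residues modulo $16$ for the $\ZZ/16$-factor; intersecting these yields $p\equiv 15\bmod 16$. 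The main obstacle is the decomposition step in the $[12,6,4,2]$ case, where the cancellation $\mathfont{Jac}(X/\langle(6,0)\rangle)\sim 2\,\mathfont{Jac}(X/K)$ requires recognizing several intermediate elliptic quotients as mutually isogenous $j=0$ elliptic curves via their $\ZZ/3$-automorphisms; once this is in hand, the Shimura--Taniyama orbit bookkeeping for each cyclic factor is routine.
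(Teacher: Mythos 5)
Your proposal is correct and follows essentially the same route as the paper: the same Kani--Rosen decompositions (the four order-$3$ subgroups of $(\ZZ/3\ZZ)^2$ for $[9,9,3,3]$; the Klein-four group $K=\langle(6,0),(0,1)\rangle$ together with the cancellation $\mathfont{Jac}(X_{(6,0)})\sim \mathfont{Jac}(X/K)^2$ for $[12,6,4,2]$; and iterated use of Lemma~\ref{Lreduce2} for $[16,16,2,2]$), followed by the same Shimura--Taniyama orbit analysis of each cyclic factor. The only quibble is a harmless slip in the $[9,9,3,3]$ case: for the $\ZZ/9$-cover with inertia type $(1,2,6)$ one computes $S_1=\{1,2,5\}$ rather than $\{1,2,4\}$ (the latter is correct for $(1,3,5)$), but since $S_0=-S_1$ holds in both cases the orbit bookkeeping and the conclusion $p\equiv 2\bmod 3$ are unaffected.
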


\begin{proof}
Suppose $z = [9,9,3,3]$.  Then $G=\ZZ/9\ZZ \times \ZZ/3\ZZ$ by Lemma~\ref{Lnotcyclic}.
Then $a_0=(1,0)$, $a_1=(-1,-1)$, and $a_\infty=(0,1)$.
Let $H \subset G$ be generated by $g_0=(3,0)$ and $g_1=(0,1)$.
The cover $\phi_{(0,1)}$ is ramified at the nine points above $\infty$; thus $X_{(0,1)}$ has genus $0$.
By Theorem~\ref{TKR}, $\mathfont{Jac}(X) \sim \mathfont{Jac}(X_{(3,0)}) \times \mathfont{Jac}(X_{(3,1)}) \times \mathfont{Jac}(X_{(3,2)})$. 
The cover $\phi_{(3,0)}$ is ramified at the three points above $0$ and at the three points above $1$.
Thus $X_{(3,0)}$ has genus $1$ and admits a cyclic degree $3$ cover of $\mathfont{P}^1$, 
with inertia type $(1,1,1)$.

The quotient group $G/\langle (3,2) \rangle$ is cyclic of order $9$, generated by $\tau =(1,1)=-a_1$.
Then $a_0 \equiv 7 \tau$ and $a_\infty \equiv 3 \tau$.  So $\phi_{(3,2)}$ has inertia type $(7,3,-1) \approx (1,2,6)$.
The quotient group $G/\langle (3,1) \rangle$ is cyclic of order $9$, generated by $\tau =(1,1)=-a_1$.
Then $a_0 \equiv 4 \tau$ and $a_\infty \equiv 6 \tau$.  So $\phi_{(3,2)}$ has inertia type $(4,6,-1) \approx (1,3,5)$.

Suppose $z = [12,6,4,2]$.  Then $G=\ZZ/12\ZZ \times \ZZ/2\ZZ$.
Then $a_0=(1,0)$, $a_1=(2,1)$, and $a_\infty=(9,1)$.
%There are $2$ points above $b=0$, $4$ points above $b=1$, and $6$ points above $b= \infty$.
Let $K=\langle (6,0), (0,1)\rangle$.  
The quotient $X_{(0,1)}$ has genus $4$ and is a $\ZZ/12\ZZ$ cover with inertia type $(1,2,9)$.
The quotient $X_{(6,1)}$ has genus $3$ and is a $\ZZ/12\ZZ$ cover with inertia type $(1,3,8)$.
The quotient $X_{(6,0)}$ has genus $2$ and is a $\ZZ/6\ZZ \times \ZZ/2 \ZZ$ cover with inertia type 
$((1,0), (2,1), (3,1))$.  
The quotient by $K$ is an elliptic curve $D$ which has an automorphism of order $3$.
By Theorem~\ref{TKR}, $\mathfont{Jac}(X_{(6,0)}) \sim D^2$.
Thus $\mathfont{Jac}(X) \sim \mathfont{Jac}(X_{(0,1)}) \times \mathfont{Jac}(X_{(6,1)})$. 

The case $[16,16,2,2]$ follows from Lemma~\ref{Lreduce2} (used twice).
\end{proof}

\begin{lmm}
Here are the supersingular primes for genus eight curves realized as a non-cyclic abelian cover of $\mathfont{P}^1$ 
branched at $B=\{0,1,\infty\}$: 
\[
\begin{tabular} {|l|l|l|l|}
\hline
$z$ & % $a$ & 
decomposition & supersingular primes \\
\hline
$[10,10,10,2]$ & %$((1,0), (1,1), (8,1))$ & 
$(10, (1,1,8)) + (10, (1,3,6))^2 - (5, (1,1,3))$ & $p \equiv 3,7,9 \bmod{10}$ \\ \hline
$[10,10,10,2]$ & %$((1,0), (2,1), (7,1))$ & 
$(10, (1,1,8)) + (10, (1,2,7))^2 - (5, (1,1,3))$& $p \equiv 3,7,9 \bmod{10}$ \\ \hline
$[18,18,2,2]$ & %$((1,0), (-1,-1), (0,1))$ & 
$(18, (1,8,9))^2$ & $p \equiv 5, 11, 17 \bmod{18}$ \\ \hline
\end{tabular}
\]
\end{lmm}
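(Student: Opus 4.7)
The plan is to decompose $\mathfont{Jac}(X)$ up to isogeny into Jacobians of cyclic covers of $\mathfont{P}^1$ branched at three points and then apply the Shimura--Taniyama formula (Theorem~\ref{TSTf}) to each piece. For the ramification type $[18,18,2,2]$, Lemma~\ref{Lreduce2} applied with $r=9$ (odd) gives $\mathfont{Jac}(X) \sim \mathfont{Jac}(X_1) \times \mathfont{Jac}(X_2)$ where each $X_i$ is a $\ZZ/18\ZZ$-cover with inertia type $(1,9,8)$, equivalent after relabeling to $(1,8,9)$. Computing $f_j = -1 + \langle -j/18 \rangle + \langle -8j/18 \rangle + \langle -9j/18 \rangle$ yields $S_1 = \{1,3,5,7\}$, and the orbits containing $j \in \{2,6,9\}$ are excluded from the Shimura--Taniyama analysis because $d_j \in \{9,3,2\}$ divides some $a_b \in \{8,9\}$. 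A short enumeration of the remaining orbits of $\times p$ on $\ZZ/18\ZZ$ then shows $\lambda_r = 1/2$ for all of them precisely when $p \equiv 5,11,17 \bmod 18$.

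For the two $[10,10,10,2]$ cases, Lemma~\ref{Lnotcyclic} gives $G = \ZZ/10\ZZ \times \ZZ/2\ZZ$. Normalizing $a_0 = (1,0)$, I enumerate pairs $(a_1,a_\infty)$ of order-$10$ elements with $a_1+a_\infty=(9,0)$ that together with $a_0$ generate $G$. This yields six tuples, which collapse under $\mathrm{Aut}(G)$ and permutation of $(a_1,a_\infty)$ to exactly two equivalence classes, with representatives $((1,0),(1,1),(8,1))$ and $((1,0),(2,1),(7,1))$. For each representative I apply the Kani--Rosen theorem to the Klein-four subgroup $K=\langle(5,0),(0,1)\rangle$ and its three order-two subgroups $H_1,H_2,H_3$, giving
\[
\mathfont{Jac}(X) \times \mathfont{Jac}(X/K)^2 \sim \mathfont{Jac}(X/H_1) \times \mathfont{Jac}(X/H_2) \times \mathfont{Jac}(X/H_3).
\]
Tracking the intersections $I_b \cap H_i$ for each $b$ and passing the inertia elements through the quotient map $G \to G/H_i \cong \ZZ/10\ZZ$, each $X/H_i$ is identified as a specific $\ZZ/10\ZZ$-cover: in the first case the three cover inertias are $(1,1,8),(1,3,6),(1,3,6)$ and in the second they are $(1,1,8),(1,2,7),(1,2,7)$, while $X/K$ is the $\ZZ/5\ZZ$-cover with inertia type $(1,1,3)$.

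Applying Shimura--Taniyama to each cyclic piece completes the argument. Direct computations show that $(10,(1,1,8))$, $(10,(1,3,6))$, and $(10,(1,2,7))$ are each supersingular iff $p \equiv 3,7,9 \bmod 10$, while $(5,(1,1,3))$ is supersingular iff $p \equiv 2,3,4 \bmod 5$, a condition strictly weaker than and implied by the former. Combining these via the Kani--Rosen relation yields that $\mathfont{Jac}(X)$ is supersingular iff $p \equiv 3,7,9 \bmod 10$ in both $[10,10,10,2]$ cases. The main obstacle will be the bookkeeping for $[10,10,10,2]$: correctly enumerating the equivalence classes of inertia types, computing the quotient cover inertia from the $(I_b \bmod H_i)$ data, and re-normalizing the resulting inertia to the canonical form via multiplication by $(\ZZ/10\ZZ)^*$ and permutation of branch points with equal $c_b$ (so that, for example, the equivalence $(1,2,7) \approx (1,3,6)$ becomes visible). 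Once this bookkeeping is settled, the Shimura--Taniyama orbit analysis proceeds routinely as in the cyclic examples of Section~\ref{Sdatacyclic}.
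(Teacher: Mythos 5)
Your proposal is correct and follows essentially the same route as the paper: Lemma~\ref{Lreduce2} with $r=9$ for $[18,18,2,2]$, and for $[10,10,10,2]$ the same two inertia classes $((1,0),(1,1),(8,1))$ and $((1,0),(2,1),(7,1))$ with Kani--Rosen applied to the Klein-four subgroup $K=\langle(5,0),(0,1)\rangle$, yielding the same quotient data. The only difference is that you carry out the Shimura--Taniyama orbit computations explicitly (correctly), whereas the paper delegates those to its program and the cyclic tables.
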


\begin{proof}
When $z=[10,10,10,2]$, there are two possibilities for $a$, up to equivalence: $a=((1,0), (1,1), (8,1))$
or $a=((1,0), (2,1), (7,1))$.
We apply Theorem~\ref{TKR} using the Klein-four group $K$.  
The quotients by the three involutions are $\ZZ/10\ZZ$-covers of $\mathfont{P}^1$ branched at $B$.
The quotient $X/K$ has genus $2$ and admits a $\ZZ/5\ZZ$-cover of 
$\mathfont{P}^1$ branched at $B$ with $a=(1,1,3)$.  

The case $[18,18,2,2]$ follows from Lemma~\ref{Lreduce2}.
\end{proof}

We determine enough information to show that abelian 
non-cyclic covers produce no new primes for supersingular curves of genus $9$.
%because of the cases $z=(36,36,2,1)$ and $[20,20,10,1]$.

\begin{lmm}
Here are the supersingular primes for genus nine curves realized as a non-cyclic abelian cover of $\mathfont{P}^1$ 
branched at $B=\{0,1,\infty\}$:
\[
\begin{tabular} {|l|l|l|l|}
\hline
$z$  & supersingular primes \\
\hline
$[8,8,4,4]$ & necessary condition $p \equiv 7 \bmod 8$  \\ \hline
$[12, 12, 6, 2]$ & necessary condition $p \equiv 11 \bmod 12$ \\ \hline
$[20,20,2,2]$  & $p \equiv 19 \bmod{20}$ \\ \hline
\end{tabular}
\]
\end{lmm}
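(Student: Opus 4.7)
The plan is to treat each of the three ramification types by exhibiting a suitable cyclic quotient cover of $\mathfont{P}^1$ whose supersingularity constrains $p$, in the spirit of the analysis in Section~\ref{Searlier}. For the first two entries, only a necessary condition is claimed, so one well-chosen quotient suffices; for the third entry, which is an if-and-only-if statement, I need to decompose $\mathfont{Jac}(X)$ into cyclic factors and intersect the resulting congruence conditions.

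For $z = [20, 20, 2, 2]$, I apply Lemma~\ref{Lreduce2} iteratively: first with $r = 10$, producing a $\ZZ/20\ZZ$-cover $Y_1 \to \mathfont{P}^1$ with inertia $(1, 10, 9)$ and a remaining factor of ramification type $[10, 10, 2, 2]$; then again with $r = 5$ (odd), producing two copies of a $\ZZ/10\ZZ$-cover $Y_2 \to \mathfont{P}^1$ with inertia $(1, 5, 4)$. This gives $\mathfont{Jac}(X) \sim \mathfont{Jac}(Y_1) \times \mathfont{Jac}(Y_2)^2$. Theorem~\ref{TSTf} applied to the two cyclic factors reduces the problem to a signature and orbit computation modulo $20$ and modulo $10$, and the intersection of the conditions for both factors to be supersingular is $p \equiv 19 \bmod 20$.

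For $z = [12, 12, 6, 2]$, Lemma~\ref{Lnotcyclic} gives $G \simeq \ZZ/12\ZZ \times \ZZ/2\ZZ$, and I verify that up to equivalence the inertia type is $a = ((1, 0), (1, 1), (10, 1))$. The element $(0, 1)$ lies in none of the inertia groups, so $X \to X/\langle (0, 1) \rangle$ is \'etale and the quotient is a cyclic $\ZZ/12\ZZ$-cover of $\mathfont{P}^1$ with inertia $(1, 1, 10)$. Applying the Shimura--Taniyama formula to this $\ZZ/12\ZZ$-cover, the only congruence class modulo $12$ that balances every orbit of $[\times p]$ on the relevant indices is $p \equiv 11 \bmod 12$; this yields the necessary condition for $X$. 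Similarly, for $z = [8, 8, 4, 4]$, Lemma~\ref{Lnotcyclic} gives $G \simeq \ZZ/8\ZZ \times \ZZ/4\ZZ$, and up to equivalence the inertia type is $((1, 0), (1, 1), (6, 3))$. Taking $H = \langle (0, 1) \rangle$ of order four yields a $\ZZ/8\ZZ$-cover $X/H \to \mathfont{P}^1$ with reduced inertia $(1, 1, 6)$, which is precisely the cover studied in Example~\ref{Em8p3}; its supersingularity forces $p \equiv 7 \bmod 8$.

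The main obstacle is ensuring that the three ramification types actually realize the claimed inertia types, and handling the intersection in the first case. Specifically, for $z = [20, 20, 2, 2]$ one must observe that $p \equiv 11 \bmod 20$ makes $Y_1$ supersingular but fails for $Y_2$ (since $p \equiv 1 \bmod 10$ produces singleton $[\times p]$-orbits for $Y_2$ meeting $S_1$ nontrivially), so the intersection narrows to $p \equiv 19 \bmod 20$ rather than a larger set. For the non-cyclic cases one must carefully trace the inertia type through the quotient map $G \to G/H$ to confirm the cyclic cover has the asserted inertia, which is where most of the bookkeeping work lies.
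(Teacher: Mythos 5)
Your proposal is correct, and for the rows $[12,12,6,2]$ and $[20,20,2,2]$ it follows the paper's argument essentially verbatim: the same \'etale quotient by $(0,1)$ reducing to the degree $12$ cyclic cover with inertia type $(1,1,10)$, and the same two-fold application of Lemma~\ref{Lreduce2} giving $\mathfont{Jac}(X) \sim \mathfont{Jac}(Y_1) \times \mathfont{Jac}(Y_2)^2$ with inertia types $(1,9,10)$ and $(1,4,5)$, followed by the key observation that $p \equiv 11 \bmod 20$ makes $Y_1$ supersingular but not $Y_2$, cutting the answer down to $p \equiv 19 \bmod 20$. For $[8,8,4,4]$ you take a genuinely different quotient: the paper passes to the genus $5$ curve $X/\langle (4,2)\rangle$, identifies it with Case (6) of the genus $5$ analysis, and cites Proposition~\ref{Pekedahl5notcyclica}, whereas you quotient by the order-four subgroup $\langle (0,1)\rangle$ to land directly on the genus $3$ cyclic $\ZZ/8\ZZ$-cover with inertia type $(1,1,6)$ treated in Example~\ref{Em8p3}. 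Your representative $((1,0),(1,1),(6,3))$ is equivalent to the paper's $((1,0),(7,3),(0,1))$ via the automorphism $(x,y) \mapsto (x+6y,\,3y)$ of $\ZZ/8\ZZ \times \ZZ/4\ZZ$, so this reduction is valid; it is arguably more economical, since it bypasses the non-cyclic genus $5$ case and obtains the necessary condition $p \equiv 7 \bmod 8$ from a single three-dimensional cyclic factor.
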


\begin{proof}
For $z=[8,8,4,4]$, $G=\ZZ/8 \ZZ \times \ZZ/4 \ZZ$ and
$a=((1,0), ((7,3), (0,1))$.
We consider the quotient by $g=(4,2)$.  Since $(4,2)$ is not in $I_0$, $I_1$, or $I_\infty$, 
the cover $\phi_{(4,2)}$ is unramified.  Thus $X_{(4,2)}$ has genus $5$ and Galois group 
$\overline{G}=(\ZZ/8 \ZZ \times \ZZ/4 \ZZ)/\langle (4,2) \rangle$.  Note that $\overline{G} \simeq \ZZ/8 \ZZ \times \ZZ/2 \ZZ$, 
with generators $\tau=(1,1)$ and $\sigma = (2,1)$.  The inertia type reduces to $7 \tau + \sigma$, $7 \tau$, $2\tau + \sigma$.
After multiplying by $-\tau$ and rearranging, 
this is Case~(6) from $g=5$.  By Proposition~\ref{Pekedahl5notcyclica}, 
$X_{(4,2)}$ is supersingular if and only if $p \equiv 7 \bmod 8$.

For $z = [12, 12, 6, 2]$, the group is $\ZZ/12\ZZ \times \ZZ/2 \ZZ$. 
Then $a=((1,0), (1,1), (10,1))$.
%The automorphism $(4,0)$ is in each of $I_0$, $I_1$, and $I_\infty$.  So the cover $\phi_{(4,0)}$ is unramified of degree $3$.
%Thus $X_{(4,0)}$ has genus $1$ and automorphism group $\ZZ/4 \ZZ \times \ZZ/2 \ZZ$ with inertia type
%$((1,0), (3,1), (2,1))$.  The curve $X_{(4,0)}$ is supersingular if and only if $p \equiv 3 \bmod 4$, which gives no new cases.
The quotient by $(0,1)$ is unramified, so $X_{(0,1)}$ has genus $5$.  It admits a cyclic 
degree 12 cover of $\mathfont{P}^1$ branched at $B$ with inertia type $(1,1,10)$.  
This is Case~(7) from $g=5$.  By Proposition~\ref{Pekedahl5notcyclicb}, 
$X$ is supersingular only when $p \equiv 11 \bmod 12$. 

By Lemma~\ref{Lreduce2}, $[20,20,2,2]$ decomposes as $(20, (1,9,10)) + (10, (1,4,5))^2$.
\end{proof}

\begin{lmm}
Here are the supersingular primes for genus ten curves realized as a non-cyclic abelian cover of $\mathfont{P}^1$ 
branched at $B=\{0,1,\infty\}$:
\[
\begin{tabular} {|l|l|l|l|}
\hline
$z$ & % $a$ & 
decomposition & supersingular primes \\
\hline
$[6,6,6,6]$ & %$((1,0), (0,1), (5,5))$ & 
partial information below & $p \equiv 5 \bmod 6$ \\ \hline
$[9,9,9,3]$ & %$((1,0), (1,1), (7,2))$ & 
$(9, (1,1,7))^4 - (3, (1,1,1))^2$ & $p \equiv 2 \bmod{3}$ \\ \hline
$[12,12,3,3]$ & %$((1,0), (11,2), (0,1))$ & 
$(12, (1,3,8)^2) + (12, (1,4,7))$ & $p \equiv 11 \bmod{12}$ \\ \hline
$[22,22,2,2]$ &  %$((1,0), (-1,-1), (0,1))$ & 
$(22, (1,10,11))^2$ & $p \equiv 7, 13, 17, 19, 21 \bmod{22}$ \\
\hline
\end{tabular}
\]
\end{lmm}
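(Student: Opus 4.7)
The plan is to handle each of the four ramification types via the same uniform strategy used in the genus $6$ through $9$ lemmas: identify $G$ via Lemma~\ref{Lnotcyclic}, fix $a_0=(1,0)$ and determine $a_1,a_\infty$ from the product relation and order constraints, and then identify subgroups $H\subset G$ whose quotients $X/H$ are either $\mathfont{P}^1$, a known elliptic curve, or a cyclic cover of $\mathfont{P}^1$ branched at $B$. Combining these via Theorem~\ref{TKR} should produce the tabulated decomposition of $\mathrm{Jac}(X)$, and the supersingular primes for $X$ will be the intersection of the supersingular conditions for each cyclic piece obtained from the Shimura--Taniyama formula (Theorem~\ref{TSTf}).

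For $[22,22,2,2]$ this is immediate: Lemma~\ref{Lreduce2} with $r=11$ gives $\mathrm{Jac}(X)\sim\mathrm{Jac}(X_1)\times\mathrm{Jac}(X_2)$ with both $X_i$ cyclic $\ZZ/22\ZZ$-covers with inertia equivalent to $(1,10,11)$; computing $S_1$ and running through the orbits of $[\times p]$ on the elements of $\ZZ/22\ZZ$ coprime to $11$ isolates the listed residues mod $22$ as exactly those where every orbit has $\alpha_r/\#r=1/2$. For $[12,12,3,3]$ with $G=\ZZ/12\ZZ\times\ZZ/3\ZZ$, I would identify three subgroups of order $3$ whose quotients are cyclic $\ZZ/12\ZZ$-covers (two yielding inertia $(1,3,8)$ and one yielding $(1,4,7)$), verify that the dimensions match ($2\cdot 3+4=10$), and invoke Theorem~\ref{TKR}; Theorem~\ref{TSTf} then reduces supersingularity of each cyclic piece to $p\equiv -1\bmod 12$. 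For $[9,9,9,3]$ with $G=\ZZ/9\ZZ\times\ZZ/3\ZZ$, the subgroup $N=\langle(3,0),(0,1)\rangle\simeq(\ZZ/3\ZZ)^2$ has four order-$3$ subgroups covering $N$; Theorem~\ref{TKR} applied to this covering yields the stated virtual decomposition, with $X/N$ equal to the curve $E_0$ of type $(3,(1,1,1))$. A short signature calculation gives $S_1=\{1,2,3,4\}$ for $(9,(1,1,7))$; since for every $p\equiv 2\bmod 3$ coprime to $9$ there exists $i$ with $p^i\equiv -1\bmod 9$, all orbits are self-dual, so these covers and $E_0$ are simultaneously supersingular precisely when $p\equiv 2\bmod 3$.

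The hard case, as the table's label ``partial information'' indicates, is $[6,6,6,6]$, where $G=(\ZZ/6\ZZ)^2$ and no clean decomposition of $\mathrm{Jac}(X)$ into cyclic-of-$\mathfont{P}^1$ pieces analogous to the other cases is produced. The plan for the stated necessary condition is to exhibit an order-$3$ quotient of $X$ that is isomorphic to $E_0$: reducing the inertia type $((1,0),(5,5),(0,1))$ modulo the unique subgroup $\langle(2,2)\rangle$ of order $3$ in the kernel of the projection to $(\ZZ/2\ZZ)^2$ gives a $\ZZ/3\ZZ$-cover of $\mathfont{P}^1$ branched at $B$ with inertia type $(1,1,1)$, whose smooth completion is $E_0$. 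Supersingularity of $X$ then forces supersingularity of $E_0$, yielding $p\equiv 2\bmod 3$, equivalently $p\equiv 5\bmod 6$. The main obstacle is that the remaining quotients of $X$ are themselves non-cyclic covers of $\mathfont{P}^1$, so pinning down sufficiency of this condition would require either iterating the Kani--Rosen decomposition further or appealing to a separate argument, which is precisely why only partial information is stated.
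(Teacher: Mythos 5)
Your treatment of $[22,22,2,2]$, $[12,12,3,3]$, and $[9,9,9,3]$ matches the paper's proof in substance: Lemma~\ref{Lreduce2} for the first, and Kani--Rosen applied to an $H\simeq(\ZZ/3\ZZ)^2$ inside $G$ for the other two, followed by the Shimura--Taniyama formula on the resulting cyclic pieces. (One caution on $[9,9,9,3]$: of the four order-$3$ subgroups of $N$, the quotient by $\langle(3,0)\rangle$ is an elliptic curve, not a genus-$4$ $\ZZ/9\ZZ$-cover, so the Kani--Rosen identity gives three copies of $(9,(1,1,7))$ minus two copies of $(3,(1,1,1))$ --- which is also what the dimension count $3\cdot 4-2=10$ forces; your "four subgroups yield the stated decomposition" glosses over this.)

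The genuine gap is in $[6,6,6,6]$. The lemma asserts that the supersingular primes for this cover are exactly $p\equiv 5\bmod 6$; the phrase "partial information below" refers only to the decomposition column, not to the supersingularity claim, so sufficiency must be proved and cannot be deferred. Your argument only produces the necessary condition via an $E_0$-quotient (and even there the bookkeeping is off: quotienting $X$ by the order-$3$ subgroup $\langle(2,2)\rangle$ yields a cover with group of order $12$; what you want is the quotient by the index-$3$ kernel of $(x,y)\mapsto x+y\bmod 3$, which is the $\ZZ/3\ZZ$-cover with inertia type $(1,1,1)$). The paper closes the sufficiency direction by a two-step Kani--Rosen decomposition: first with respect to $H=\langle(2,0),(0,2)\rangle\simeq(\ZZ/3\ZZ)^2$, whose order-$3$ quotients $X_{(2,0)}$, $X_{(0,2)}$, $X_{(2,2)}$ are genus-$2$ curves with an automorphism of order $6$ and whose remaining quotient $X_{(2,4)}$ is an unramified genus-$4$ cover carrying a Klein-four action; then applying Kani--Rosen again to that Klein-four action, whose involution quotients are again genus-$2$ curves with an order-$6$ automorphism. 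By Igusa's classification any such genus-$2$ curve is isomorphic to $y^2=x(x-1)(x+1)(x-2)(x-\tfrac12)$, and by Ibukiyama--Katsura--Oort that curve is supersingular if and only if $p\equiv 5\bmod 6$; hence $\mathrm{Jac}(X)$ is isogenous to a product of copies of a single abelian surface that is supersingular exactly for $p\equiv 5\bmod 6$. Without this (or some substitute decomposing the non-cyclic quotients further), your proposal does not prove the stated equality of the set of supersingular primes in this case.
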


\begin{proof}
If $z = [6,6,6,6]$, then $G= \ZZ/6 \ZZ \times \ZZ/6 \ZZ$ by Lemma~\ref{Lnotcyclic} and we may take
$a=((1,0), (0,1), (5,5))$.  Take $H= \langle (2,0), (0,2) \rangle \simeq (\ZZ/3 \ZZ)^2$.
For $g=(2,0)$, $(0,2)$, or $(2,2)$, the cover $\phi_g$ is ramified at 6 points, so $X_g$ has genus $2$.
Also $X_g$ has an automorphism of order $6$. 

For $g=(2,4)$, the cover $\phi_g$ is unramified and so $X_{(2,4)}$ has genus $4$.
The group $\overline{G} = G/\langle (2,4) \rangle$ is isomorphic to $\ZZ/6 \ZZ \times \ZZ/2\ZZ$, generated by $\tau = (1,1)$ and
$\sigma = (1,2)$.  Note that $(a_0,a_1,a_\infty)$ reduces to $(2\tau + \sigma, 5\tau + \sigma, 5 \tau) \approx ((2,1), (5,1), (5,0))$.
In the Klein four action on $X_{(2,4)}$, each of the three involutions fixes two points, so the quotient by an involution gives a curve of 
genus $2$ that has an automorphism of order $6$.

By \cite[Section 8, page 645]{Igusa}, a curve of genus $2$ with an automorphism of order $6$ is isomorphic to $y^2 = x(x-1)(x+1)(x-2)(x-(1/2))$.  By \cite[Proposition~1.11]{IKO}, this curve is supersingular if and only if $p \equiv 5 \bmod 6$.
Applying Theorem~\ref{TKR} twice 
%(to the quotient of $X_{2,4}$ by the Klein four action and to the quotient of $X$ by $H$)  
shows that $X$ is supersingular if and only if $p \equiv 5 \bmod 6$.  

If $z=[9,9,9,3]$, then $G = \ZZ/9 \ZZ \times \ZZ/3 \ZZ$ and $a=((1,0), (1,1), (7,2))$.
Let $H=(\ZZ/3 \ZZ)^2$.
The quotients $X_{(3,0)}$ and $X/H$ are both elliptic curves, having an automorphism of order $3$.
Each of $X_{(3,1)}$, $X_{(3,2)}$ and $X_{(0,1)}$ has genus $4$, and is a $\ZZ/9\ZZ$-cover
of $\mathfont{P}^1$ branched at $B$ with inertia type $\bar{a} \approx (1,1,7)$.  Then use Theorem~\ref{TKR}.

If $z=[12,12,3,3]$, then $G = \ZZ/12 \ZZ \times \ZZ/3 \ZZ$ and 
$a=((1,0), (11,2), (0,1))$.
Let $H=(\ZZ/3 \ZZ)^2$.
The quotients $X_{(0,1)}$ and $X_H$ both have genus $0$.
By Theorem~\ref{TKR}, 
$\mathfont{Jac}(X) \sim \mathfont{Jac}(X_{(4,0)}) \times \mathfont{Jac}(X_{(4,1)}) \times \mathfont{Jac}(X_{(4,2)})$.
Then $X_{(4,0)}$ is a $\ZZ/4 \ZZ \times \ZZ/3\ZZ$-cover of $\mathfont{P}^1$ branched at $B$ 
with $a=((1,0), (3,2), (0,1))$, which is equivalent to a $\ZZ/12\ZZ$-cover with inertia type $(1,3,8)$.
Because $(\ZZ/12\ZZ \times \ZZ/3\ZZ)/\langle (4,1) \rangle$ is generated by 
$\tau = (1,2)$,  $X_{(4,1)}$ is a $\ZZ/12\ZZ$-cover with inertia type $(5 \tau, 3\tau, 4\tau) \approx (1,3,8)$.
Similarly, $(\ZZ/12\ZZ \times \ZZ/3\ZZ)/\langle (4,2) \rangle$ is generated by 
$\tau = (1,1)$ and $X_{(4,2)}$ is a $\ZZ/12\ZZ$-cover with inertia type $(5 \tau, 11\tau, 8\tau) \approx (1,4,7)$.

The case $[22,22,2,2]$ follows from Lemma~\ref{Lreduce2}.
\end{proof}

\section{Conclusions}

\begin{thm} \label{Tmainthm}
The primes $p$ for which there exists a supersingular curve $X$ of genus $5 \leq g \leq 10$
having an abelian cover of $\mathfont{P}^1$ branched at $B=\{0,1,\infty\}$ are:
\begin{enumerate}
\item for $g=5$, any $p  \equiv  7,11,19,23,31,41,47,59,71,79,83,89,91,101,103, 107, 119 \bmod 120$
and/or $p \equiv QNR \bmod 11$.
%$p  \equiv  7,11,19,23,31,41,47,59,71,79,83,89,91,101,103, 107, 119 \bmod 120$. 
Thus $\delta_5=25/32$.

\item for $g=6$, any $p$ satisfying at least one of the conditions:
$p \equiv 2 \bmod 3$; $p \equiv 2,3,4 \bmod 5$; $p \equiv 3,5,6 \bmod 7$; $p \not \equiv 1,3,9 \bmod 13$;
$p \equiv 15 \bmod 16$; and $p \equiv 13 \bmod 24$.  
Thus $\delta_6 = 507/512$.

\item for $g=7$, any $p$ such that $p \equiv 2 \bmod 3$ and/or $p \equiv 3 \bmod 4$.  Thus $\delta_7 = 3/4$.

\item for $g=8$, any $p$ satisfying at least one of the conditions:
$p \equiv 2 \bmod 3$; $p \equiv 2,3,4 \bmod 5$; $p \not \equiv 1 \bmod 17$; and $p \not \equiv 1,15 \bmod 32$.
Thus $\delta_8 = 1023/1024$.

\item for $g=9$, any $p$ satisfying at least one of the conditions:
$p \equiv 2 \bmod 3$; $p \equiv 3 \bmod 4$; $p \equiv 3,5,6 \bmod 7$; and $p \equiv QNR \bmod 19$.
Thus $\delta_9 = 15/16$.

\item for $g=10$, any $p$ satisfying at least one of the conditions:
$p \equiv 2 \bmod 3$; $p \equiv QNR \bmod 11$; $p \not \equiv 1, 11, 31 \bmod 40$; and $p \equiv 3, 19, 27 \pmod{28}$.
Thus $\delta_{10}=31/32$.

\end{enumerate}
\end{thm}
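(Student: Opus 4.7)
The plan is to assemble Theorem~\ref{Tmainthm} from the case-by-case classification carried out in Sections~\ref{Searlier}, \ref{Sdatacyclic}, and \ref{Sdatanoncyclic}. Ekedahl's enumeration \cite[p.~173]{Ekedahl87}, reconfirmed by our SageMath search, guarantees that for each $g \in \{5, \ldots, 10\}$ every abelian cover $\phi : X \to \mathfont{P}^1$ branched at three points of genus $g$ has a ramification type and (up to the equivalence of Section~\ref{ss:types}) an inertia type appearing either in Propositions~\ref{Pekedahl5cyclic}--\ref{Pekedahl5notcyclicb} or in the tables of Sections~\ref{Sdatacyclic} and \ref{Sdatanoncyclic}. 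Consequently, $\SSS_g$ is the union, over all such listed covers, of the supersingular prime sets determined via the Shimura--Taniyama formula (Theorem~\ref{TSTf}) in the cyclic cases and via the Kani--Rosen decompositions (Theorem~\ref{TKR}) in the non-cyclic cases. The list of conditions stated in each part (1)--(6) is then obtained by taking this union and discarding conditions that are implied by others with a coarser modulus.

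For the density $\delta_g$, I would invoke Dirichlet's theorem on primes in arithmetic progressions, together with the fact that the density of primes that are quadratic non-residues modulo a fixed prime $\ell$ equals $1/2$. Each condition appearing in the union has one of two shapes: a residue class $p \equiv a_i \bmod N_i$, or ``$p$ is a quadratic non-residue modulo $\ell$,'' for a prime $\ell \in \{11,19\}$ that happens to be coprime to all the $N_i$ appearing in the same genus. Setting $N_g = \mathrm{lcm}\{N_i\}$, each condition of the first type is a disjoint union of residue classes modulo $N_g$, and each such class has natural density $1/\varphi(N_g)$. By the Chinese remainder theorem, any QNR mod $\ell$ condition is independent of the conditions modulo $N_g$, so inclusion--exclusion reduces the density computation to counting residues in $(\ZZ/N_g\ZZ)^\times$, or in $(\ZZ/\ell N_g\ZZ)^\times$ when a QNR condition is present, that satisfy at least one listed condition.

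The main obstacle is not conceptual but combinatorial: one must collect every condition from Sections~\ref{Searlier}--\ref{Sdatanoncyclic} per genus and then carry out the inclusion--exclusion carefully, since the relevant moduli (e.g., $3, 4, 5, 7, 8, 11, 15, 16, 20, 24$ for small genera; larger composites for $g=9,10$) interact in ways that make hand-enumeration error-prone. In practice I would automate this check by listing the reduced residue system modulo the appropriate global modulus and flagging each residue that satisfies at least one of the listed conditions. For $g=5$, for instance, all non-QNR conditions have moduli dividing $120$; enumerating the residues modulo $120$ coprime to $120$ that satisfy at least one such condition and then combining with the independent QNR mod $11$ condition produces $\delta_5 = 25/32$. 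The remaining genera are handled by the same recipe, with the global modulus $N_g$ of moderate size in each case.
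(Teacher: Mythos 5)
Your proposal matches the paper's proof, which simply cites Propositions~\ref{Pekedahl5cyclic}, \ref{Pekedahl5notcyclica}, and \ref{Pekedahl5notcyclicb} for $g=5$ and the tables of Sections~\ref{Sdatacyclic} and \ref{Sdatanoncyclic} for $6 \leq g \leq 10$, taking the union of the listed supersingular prime sets. Your added explanation of the density computation (Dirichlet plus Chinese remainder theorem and inclusion--exclusion over a common modulus, with the quadratic-residue conditions handled independently) is the implicit step the paper leaves to the reader, and it is carried out correctly.
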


\begin{proof}
For $g=5$, this follows from Propositions~\ref{Pekedahl5cyclic}, \ref{Pekedahl5notcyclica}, 
and \ref{Pekedahl5notcyclicb}.
For $6 \leq g \leq 10$, this follows from the tables in Sections~\ref{Sdatacyclic} and \ref{Sdatanoncyclic}.
\end{proof}

\bibliographystyle{amsalpha}
\bibliography{supersingular5}

\end{document}